\documentclass[12pt,twoside]{amsart}
 \usepackage[latin1]{inputenc}
\usepackage{amsmath, amsthm, amscd, amsfonts, amssymb, graphicx}
\usepackage[bookmarksnumbered, plainpages]{hyperref}

\textwidth 16 cm \textheight 21 cm

\oddsidemargin 1.0cm \evensidemargin 1.0cm

\setcounter{page}{1}


\newtheorem{thm}{Theorem}[section]

\newtheorem{lem}[thm]{Lemma}
\newtheorem{prop}[thm]{Proposition}
\newtheorem{defn}[thm]{Definition}

\numberwithin{equation}{section}


\begin{document}

\title{\bf  The two-variable elliptic genus in odd dimensions }
\author{Yong Wang}

\thanks{{\scriptsize
\hskip -0.4 true cm \textit{2010 Mathematics Subject Classification:}
58C20; 57R20; 53C80.
\newline \textit{Key words and phrases:} Toeplitz operator; two-variable elliptic genus; Jacobi forms; modular forms; anomaly cancellation formulas }}

\maketitle

\begin{abstract}
A kind of two-variable elliptic genus for almost-complex manifolds was introduced by Ping Li and its various properties were established by him. In this paper, we define a two-variable elliptic genus for odd dimensional spin manifolds which is the index for some Toeplitz operator and a holomorphic $SL(2,Z)$-Jacobi form. We also define some two-variable elliptic genera for almost-complex manifolds and odd dimensional spin manifolds which are holomorphic $\Gamma_0(2)$, $\Gamma^0(2)$, $\Gamma _{\theta}$-Jacobi forms. By these Jacobi forms, we can get some $SL(2,{\bf Z})$ and $\Gamma^0(2)$ modular forms. By these $SL(2,{\bf Z})$ and $\Gamma^0(2)$ modular forms, we get some interesting
anomaly cancellation formulas for almost complex manifolds and odd spin manifolds. As corollaries, we get some divisibility results of the holomorphic Euler characteristic number and the index of Toeplitz operators. In addition, we also define some another two-variable elliptic genera for even (rep. odd ) dimensional manifolds which are meromorphic $\Gamma_0(2)$, $\Gamma^0(2)$, $\Gamma _{\theta}$-Jacobi forms.
\end{abstract}

\vskip 0.2 true cm


\pagestyle{myheadings}
\markboth{\rightline {\scriptsize Yong Wang}}
         {\leftline{\scriptsize  The two-variable elliptic genus in odd dimension}}

\bigskip
\bigskip


\section{ Introduction}
For an arbitrary compact spin manifold one can define its elliptic genus. It is a modular form in one variable with respect to a congruence
subgroup of level $2$. For a compact complex manifold one can define its elliptic genus as a function in two complex variables. In the last case,
the elliptic genus is the holomorphic Euler characteristic of a formal power series with vector bundle coefficients. If the first Chern class of the complex manifold is equal to zero, then the elliptic genus is a weak Jacobi form. In \cite{LiP}, Li extended the elliptic genus of an almost complex
manifold to a twisted version where an extra complex vector bundle is involved. Under some conditions, Li proved this elliptic genus is a weak Jacobi
form. In \cite{Wa1}, We extended the Li's elliptic genus and proved this generalized elliptic genus is not a weak Jacobi form and called it the generalized Jacobi form. Moreover, by this generalized Jacobi form, we got some new $SL(2,\mathbb{Z})$ modular forms. In \cite{Gr}, Gritsenko defined the modified Witten genus or the automorphic correction of elliptic genus of an arbitrary holomorphic vector bundle over a compact complex manifold, and constructed a new object $\hat{A}^{(2)}_2$-genus. Furthermore, Gritsenko proved under certain conditions that this $\hat{A}^{(2)}_2$-genus is a weak Jacobian form. In \cite{GLW}, we extended Gritsenko's $\hat{A}^{(2)}_2$-genus and proved that this generalized $\hat{A}^{(2)}_2$-genus is not a weak Jacobian form, but a generalized Jacobian form. By this generalized Jacobi form, we can get some new $SL(2,\mathbb{Z})$ modular forms as in \cite{Wa1}.\\
\indent On the other hand, in \cite{Li1}, Liu introduced a modular form of a 4k-dimensional spin manifold with a weight of 2k. In \cite{CHZ}, Chen, Han and Zhang defined an integral modular form of weight $2k$ for a $4k$-dimensional spin manifold and an integral modular form of weight $2k$ for a $4k+2$-dimensional spin manifold. In \cite{LW1} and \cite{LW2}, Liu and Wang go through studying modular invariance properties of some characteristic forms to get some new anomaly cancellation formulas on $(4k-1)$ dimensional manifolds. And they derive some results on divisibilities on $(4k-1)$ dimensional spin manifolds and congruences on $(4k-1)$ dimensional ${\rm spin^c}$ manifolds. In \cite{GWL}, Guan, Wang and Liu introduced some modular forms of weight $2k$ over $SL_{2}(\bf Z)$ and some modular forms of weight $2k$ over $\Gamma^0(2)$ and $\Gamma_0(2)$ in odd dimensions respectively through the $SL(2,{\bf Z})$ modular forms introduced in \cite{Li2} and \cite{CHZ}. In parallel, we derived some new anomaly cancellation formulas and some divisibility results over spin manifolds and ${\rm spin^c}$ manifolds in odd dimensions.\\
  \indent So a natural question is to how to define a two-variable elliptic genus for odd dimensional manifolds and by this two-variable elliptic genus derive some new anomaly cancellation formulas. In this paper, we solve this problem and we define a two-variable elliptic genus for odd dimensional spin manifolds which is the index for a Toeplitz operator and a holomorphic $SL(2,Z)$-Jacobi forms. We also define some two-variable elliptic genera for almost-complex manifolds and odd dimensional spin manifolds which are holomorphic $\Gamma_0(2)$, $\Gamma^0(2)$, $\Gamma _{\theta}$-Jacobi forms. By these Jacobi forms, we can get some $SL(2,{\bf Z})$ and $\Gamma^0(2)$ modular forms. By these $SL(2,{\bf Z})$ and $\Gamma^0(2)$ modular forms, we get some interesting
anomaly cancellation formulas for almost complex manifolds and odd spin manifolds. As corollaries, we get some divisibility results of the holomorphic Euler characteristic number and the index of Toeplitz operators. In addition, we also define some another two-variable elliptic genera for even (rep. odd ) dimensional manifolds which are meromorphic $\Gamma_0(2)$, $\Gamma^0(2)$, $\Gamma _{\theta}$-Jacobi forms.\\
 \indent This paper is organized as follows. In Section 2, we have introduced some definitions and basic concepts that we will use in the paper. In Section 3, we define a two-variable elliptic genus for odd dimensional spin manifolds. By this Jacobi form, we can get some interesting anomaly cancellation formulas for odd dimensional spin manifolds. In Section 4, we define some two-variable elliptic genera for almost-complex manifolds and odd dimensional spin manifolds which are holomorphic $\Gamma_0(2)$, $\Gamma^0(2)$, $\Gamma _{\theta}$-Jacobi forms. By these Jacobi forms, we get some interesting
anomaly cancellation formulas for almost complex manifolds and odd ${\rm spin}$ manifolds. In Section 5, we define some another two-variable elliptic genera for even (rep. odd ) dimensional manifolds which are meromorphic $\Gamma_0(2)$, $\Gamma^0(2)$, $\Gamma _{\theta}$-Jacobi forms.\\

 \vskip 1 true cm

 \section{Characteristic Forms and Modular Forms}
\quad The purpose of this section is to review the necessary knowledge on
characteristic forms and modular forms that we are going to use.\\

 \noindent {\bf  2.1 characteristic forms }\\
 \indent Let $M$ be a Riemannian manifold.
 Let $\nabla^{ TM}$ be the associated Levi-Civita connection on $TM$
 and $R^{TM}=(\nabla^{TM})^2$ be the curvature of $\nabla^{ TM}$.
 Let $\widehat{A}(TM,\nabla^{ TM})$ defined by (cf. \cite{Zh})
\begin{equation}
   \widehat{A}(TM,\nabla^{ TM})={\rm
det}^{\frac{1}{2}}\left(\frac{\frac{\sqrt{-1}}{4\pi}R^{TM}}{{\rm
sinh}(\frac{\sqrt{-1}}{4\pi}R^{TM})}\right).
\end{equation}
 Let $E$, $F$ be two Hermitian vector bundles over $M$ carrying
   Hermitian connection $\nabla^E,\nabla^F$ respectively. Let
   $R^E=(\nabla^E)^2$ (resp. $R^F=(\nabla^F)^2$) be the curvature of
   $\nabla^E$ (resp. $\nabla^F$). If we set the formal difference
   $G=E-F$, then $G$ carries an induced Hermitian connection
   $\nabla^G$ in an obvious sense. We define the associated Chern
   character form as
   \begin{equation}
       {\rm ch}(G,\nabla^G)={\rm tr}\left[{\rm
   exp}(\frac{\sqrt{-1}}{2\pi}R^E)\right]-{\rm tr}\left[{\rm
   exp}(\frac{\sqrt{-1}}{2\pi}R^F)\right].
   \end{equation}
   For any complex number $t$, let
   $$\wedge_t(E)={\bf C}|_M+tE+t^2\wedge^2(E)+\cdots,~S_t(E)={\bf
   C}|_M+tE+t^2S^2(E)+\cdots$$
   denote respectively the total exterior and symmetric powers of
   $E$, which live in $K(M)[[t]].$ The following relations between
   these operations hold,
   \begin{equation}
       S_t(E)=\frac{1}{\wedge_{-t}(E)},~\wedge_t(E-F)=\frac{\wedge_t(E)}{\wedge_t(F)}.
   \end{equation}
   Moreover, if $\{\omega_i\},\{\omega_j'\}$ are formal Chern roots
   for Hermitian vector bundles $E,F$ respectively, then
   \begin{equation}
       {\rm ch}(\wedge_t(E))=\prod_i(1+e^{\omega_i}t)
   \end{equation}
   Then we have the following formulas for Chern character forms,
   \begin{equation}
       {\rm ch}(S_t(E))=\frac{1}{\prod_i(1-e^{\omega_i}t)},~
{\rm ch}(\wedge_t(E-F))=\frac{\prod_i(1+e^{\omega_i}t)}{\prod_j(1+e^{\omega_j'}t)}.
   \end{equation}
\indent If $W$ is a real Euclidean vector bundle over $M$ carrying a
Euclidean connection $\nabla^W$, then its complexification $W_{\bf
C}=W\otimes {\bf C}$ is a complex vector bundle over $M$ carrying a
canonical induced Hermitian metric from that of $W$, as well as a
Hermitian connection $\nabla^{W_{\bf C}}$ induced from $\nabla^W$.
If $E$ is a vector bundle (complex or real) over $M$, set
$\widetilde{E}=E-{\rm dim}E$ in $K(M)$ or $KO(M)$.\\

\noindent{\bf 2.2 Some properties about the Jacobi theta functions
and modular forms}\\
   \indent We first recall the four Jacobi theta functions are
   defined as follows( cf. \cite{Ch}):
   \begin{equation}
      \theta(v,\tau)=2q^{\frac{1}{8}}{\rm sin}(\pi
   v)\prod_{j=1}^{\infty}[(1-q^j)(1-e^{2\pi\sqrt{-1}v}q^j)(1-e^{-2\pi\sqrt{-1}v}q^j)],
   \end{equation}
\begin{equation}
    \theta_1(v,\tau)=2q^{\frac{1}{8}}{\rm cos}(\pi
   v)\prod_{j=1}^{\infty}[(1-q^j)(1+e^{2\pi\sqrt{-1}v}q^j)(1+e^{-2\pi\sqrt{-1}v}q^j)],
\end{equation}
\begin{equation}
    \theta_2(v,\tau)=\prod_{j=1}^{\infty}[(1-q^j)(1-e^{2\pi\sqrt{-1}v}q^{j-\frac{1}{2}})
(1-e^{-2\pi\sqrt{-1}v}q^{j-\frac{1}{2}})],
\end{equation}
\begin{equation}
   \theta_3(v,\tau)=\prod_{j=1}^{\infty}[(1-q^j)(1+e^{2\pi\sqrt{-1}v}q^{j-\frac{1}{2}})
(1+e^{-2\pi\sqrt{-1}v}q^{j-\frac{1}{2}})],
\end{equation}
 \noindent
where $q=e^{2\pi\sqrt{-1}\tau}$ with $\tau\in\textbf{H}$, the upper
half complex plane. Let
\begin{equation}
    \theta'(0,\tau)=\frac{\partial\theta(v,\tau)}{\partial v}|_{v=0}.
\end{equation} \noindent Then the following Jacobi identity
(cf. \cite{Ch}) holds,
\begin{equation}   \theta'(0,\tau)=\pi\theta_1(0,\tau)\theta_2(0,\tau)\theta_3(0,\tau).
\end{equation}
\noindent Denote $$SL_2({\bf Z})=\left\{\left(\begin{array}{cc}
\ a & b  \\
 c  & d
\end{array}\right)\mid a,b,c,d \in {\bf Z},~ad-bc=1\right\}$$ the
modular group. Let $S=\left(\begin{array}{cc}
\ 0 & -1  \\
 1  & 0
\end{array}\right),~T=\left(\begin{array}{cc}
\ 1 &  1 \\
 0  & 1
\end{array}\right)$ be the two generators of $SL_2(\bf{Z})$. They
act on $\textbf{H}$ by $S\tau=-\frac{1}{\tau},~T\tau=\tau+1$. One
has the following transformation laws of theta functions under the
actions of $S$ and $T$ (cf. \cite{Ch}):
\begin{equation}
    \theta(v,\tau+1)=e^{\frac{\pi\sqrt{-1}}{4}}\theta(v,\tau),~~\theta(v,-\frac{1}{\tau})
=\frac{1}{\sqrt{-1}}\left(\frac{\tau}{\sqrt{-1}}\right)^{\frac{1}{2}}e^{\pi\sqrt{-1}\tau
v^2}\theta(\tau v,\tau);
\end{equation}
 \begin{equation}
     \theta_1(v,\tau+1)=e^{\frac{\pi\sqrt{-1}}{4}}\theta_1(v,\tau),~~\theta_1(v,-\frac{1}{\tau})
=\left(\frac{\tau}{\sqrt{-1}}\right)^{\frac{1}{2}}e^{\pi\sqrt{-1}\tau
v^2}\theta_2(\tau v,\tau);
 \end{equation}
\begin{equation}   \theta_2(v,\tau+1)=\theta_3(v,\tau),~~\theta_2(v,-\frac{1}{\tau})
=\left(\frac{\tau}{\sqrt{-1}}\right)^{\frac{1}{2}}e^{\pi\sqrt{-1}\tau
v^2}\theta_1(\tau v,\tau);
\end{equation}
\begin{equation}    \theta_3(v,\tau+1)=\theta_2(v,\tau),~~\theta_3(v,-\frac{1}{\tau})
=\left(\frac{\tau}{\sqrt{-1}}\right)^{\frac{1}{2}}e^{\pi\sqrt{-1}\tau
v^2}\theta_3(\tau v,\tau).
\end{equation}
\begin{equation}
 \theta'(0,-\frac{1}{\tau})=\frac{1}{\sqrt{-1}}\left(\frac{\tau}{\sqrt{-1}}\right)^{\frac{1}{2}}
\tau\theta'(0,\tau),
\end{equation}

\begin{equation}
    \theta'(v,\tau+1)=e^{\frac{\pi\sqrt{-1}}{4}}\theta'(v,\tau),
 \end{equation}
 \begin{equation}
    ~~\theta'(v,-\frac{1}{\tau})
=\frac{1}{\sqrt{-1}}\left(\frac{\tau}{\sqrt{-1}}\right)^{\frac{1}{2}}e^{\pi\sqrt{-1}\tau
v^2}(2\pi \sqrt{-1}\tau\theta(\tau v,\tau)+\tau\theta'(\tau v,\tau));
\end{equation}
 \begin{equation}
    \theta'_1(v,\tau+1)=e^{\frac{\pi\sqrt{-1}}{4}}\theta'_1(v,\tau),
 \end{equation}
 \begin{equation}
    ~~\theta'_1(v,-\frac{1}{\tau})
=\frac{1}{\sqrt{-1}}\left(\frac{\tau}{\sqrt{-1}}\right)^{\frac{1}{2}}e^{\pi\sqrt{-1}\tau
v^2}(2\pi \sqrt{-1}\tau\theta_2(\tau v,\tau)+\tau\theta'_2(\tau v,\tau));
\end{equation}
\begin{equation}
    \theta'_2(v,\tau+1)=e^{\frac{\pi\sqrt{-1}}{4}}\theta'_3(v,\tau),
 \end{equation}
 \begin{equation}
    ~~\theta'_2(v,-\frac{1}{\tau})
=\frac{1}{\sqrt{-1}}\left(\frac{\tau}{\sqrt{-1}}\right)^{\frac{1}{2}}e^{\pi\sqrt{-1}\tau
v^2}(2\pi \sqrt{-1}\tau\theta_1(\tau v,\tau)+\tau\theta'_1(\tau v,\tau));
\end{equation}
\begin{equation}
    \theta'_3(v,\tau+1)=e^{\frac{\pi\sqrt{-1}}{4}}\theta'_2(v,\tau),
 \end{equation}
 \begin{equation}
    ~~\theta'_3(v,-\frac{1}{\tau})
=\frac{1}{\sqrt{-1}}\left(\frac{\tau}{\sqrt{-1}}\right)^{\frac{1}{2}}e^{\pi\sqrt{-1}\tau
v^2}(2\pi \sqrt{-1}\tau\theta_3(\tau v,\tau)+\tau\theta'_3(\tau v,\tau));
\end{equation}
\noindent
 \noindent {\bf Definition 2.1} A modular form over $\Gamma$, a
 subgroup of $SL_2({\bf Z})$, is a holomorphic function $f(\tau)$ on
 $\textbf{H}$ such that
 \begin{equation}
    f(g\tau):=f\left(\frac{a\tau+b}{c\tau+d}\right)=\chi(g)(c\tau+d)^kf(\tau),
 ~~\forall g=\left(\begin{array}{cc}
\ a & b  \\
 c & d
\end{array}\right)\in\Gamma,
 \end{equation}
\noindent where $\chi:\Gamma\rightarrow {\bf C}^{\star}$ is a
character of $\Gamma$. $k$ is called the weight of $f$.\\
Let $$\Gamma_0(2)=\left\{\left(\begin{array}{cc}
\ a & b  \\
 c  & d
\end{array}\right)\in SL_2({\bf Z})\mid c\equiv 0~({\rm
mod}~2)\right\},$$
$$\Gamma^0(2)=\left\{\left(\begin{array}{cc}
\ a & b  \\
 c  & d
\end{array}\right)\in SL_2({\bf Z})\mid b\equiv 0~({\rm
mod}~2)\right\},$$
be the two modular subgroups of $SL_2({\bf Z})$.
It is known that the generators of $\Gamma_0(2)$ are $T,~ST^2ST$,
the generators of $\Gamma^0(2)$ are $STS,~T^2STS$ (cf. \cite{Ch}).\\
\indent If $\Gamma$ is a modular subgroup, let ${\mathcal{M}}_{{\bf
R}}(\Gamma)$ denote the ring of modular forms over $\Gamma$ with
real Fourier coefficients. Writing $\theta_j=\theta_j(0,\tau),~1\leq
j\leq 3,$ we introduce six explicit modular forms (cf. \cite{Li1}),
$$\delta_1(\tau)=\frac{1}{8}(\theta_2^4+\theta_3^4),~~\varepsilon_1(\tau)=\frac{1}{16}\theta_2^4\theta_3^4,$$
$$8\delta_2(\tau)=-\frac{1}{8}(\theta_1^4+\theta_3^4),~~\varepsilon_2(\tau)=\frac{1}{16}\theta_1^4\theta_3^4,$$
\noindent They have the following Fourier expansions in
$q^{\frac{1}{2}}$:
$$\delta_1(\tau)=\frac{1}{4}+6q+\cdots,~~\varepsilon_1(\tau)=\frac{1}{16}-q+\cdots,$$
$$8\delta_2(\tau)=-1-24q^{\frac{1}{2}}-24q+\cdots,~~\varepsilon_2(\tau)=q^{\frac{1}{2}}+8q+\cdots,$$
\noindent where the $"\cdots"$ terms are the higher degree terms,
all of which have integral coefficients. They also satisfy the
transformation laws,
\begin{equation}
    \delta_2(-\frac{1}{\tau})=\tau^2\delta_1(\tau),~~~~~~\varepsilon_2(-\frac{1}{\tau})
=\tau^4\varepsilon_1(\tau),
\end{equation}
\noindent {\bf Lemma 2.2} (\cite{Li1}) {\it $\delta_1(\tau)$ (resp.
$\varepsilon_1(\tau)$) is a modular form of weight $2$ (resp. $4$)
over $\Gamma_0(2)$, $\delta_2(\tau)$ (resp. $\varepsilon_2(\tau)$)
is a modular form of weight $2$ (resp. $4$) over $\Gamma^0(2)$,
while  $\delta_3(\tau)$ (resp. $\varepsilon_3(\tau)$) is a modular
form of weight $2$ (resp. $4$) over $\Gamma_\theta(2)$ and moreover
${\mathcal{M}}_{{\bf R}}(\Gamma^0(2))={\bf
R}[\delta_2(\tau),\varepsilon_2(\tau)]$.}\\

We recall the Eisenstein series $G_{2k}(\tau)$ are defined to be
  \begin{equation}
  E_{2k}(\tau):=1-\frac{4k}{B_{2k}}\sum_{n=1}^{\infty}\sigma_{2k-1}(n)\cdot q^n,~~G_{2k}(\tau)=-\frac{B_{2k}}{4k}\cdot E_{2k}(\tau)
\end{equation}
where $\sigma_k(n):=\sum_{m>0,~m|n}m^k$ and $B_{2k}$ are the Bernoulli numbers. It is well known that the whole grading ring of modular forms
over $SL(2,Z)$ are generated by $E_4(\tau)$ and $E_6(\tau)$ and
 \begin{equation}
  E_{4}(\tau):=1+240q+2160q^2+\cdots,~~E_{6}(\tau)=1-504q-16632q^2+\cdots.
\end{equation}
\\
\noindent{\bf Definition 2.3} A meromorphic Jacobi form of index $m$ and weight $l$ over $L\times \Gamma$, where $L$ is an integral lattice in the complex plane $\mathbb{C}$ preserved by the modular subgroup $\Gamma\subset SL(2,Z)$, is a (meromorphic) function $F(z,\tau)$ on $C\times H$ such that
\begin{align}
F(\frac{z}{c\tau+d_0},\frac{a\tau+b}{c\tau+d_0})=(c\tau+d_0)^{l}{\rm exp}(2\pi\sqrt{-1}m\frac{cz^2}{c\tau+d_0}){F}(z,\tau),
\end{align}
\begin{align}
{F}(z+\lambda \tau+\mu,\tau)=(-1)^{m(\mu +\lambda) }{\rm exp}(-2\pi\sqrt{-1}m(2\lambda z+\lambda^2\tau))
{F}(z,\tau),
\end{align}
where $\left(\begin{array}{cc}
\ a & b  \\
 c  & d_0
\end{array}\right)\in \Gamma$ and $\lambda,\mu \in L$.\\

\section{The two variable elliptic genus in odd dimensions}

\indent Let $M$ be a $(2d+1)$-dimensional closed ${\rm spin}$ manifold. Let $W$ denote a complex $l$-dimensional vector bundle on $M$. Denote the first Chern classes of $W$ by $c_1(W)$. We denote by
$\pm 2\pi \sqrt{-1}x_i~(1\leq i\leq d),~0$ and $2\pi \sqrt{-1}w_j~(1\leq j\leq l)$ respectively the formal Chern roots of $TM\otimes C$ and $W$. Formally the Todd form of $M$ is written by (probably not well defined)
\begin{equation}
  {\rm Td}(M):=\prod_{i=1}^d\frac{2\pi \sqrt{-1}x_i}{1-e^{-2\pi \sqrt{-1}x_i}}.
\end{equation}
Let $(\tau,z)\in \mathcal{H}\times \mathcal{C}$ where $\mathcal{H}$ is the upper half plane and $\mathcal{C}$ is the complex plane. Let $y=e^{2\pi \sqrt{-1}z}$ and $q= e^{2\pi \sqrt{-1}\tau}$
and $c:=\prod_{j=1}^{\infty}(1-q^j)$.\\

Let \begin{align}
{\rm E}(M,W,\tau,z):=&c^{2(d-l)+1}y^{-\frac{l}{2}} \bigotimes _{n=1}^{\infty}
\left( \wedge_{-yq^{n-1}}(W^*)\otimes\wedge_{-y^{-1}q^{n}}(W)\right)\\\notag
&\bigotimes\left(\bigotimes _{n=1}^{\infty}S_{q^n}(TM\otimes C)\right).
\end{align}

Now let $g:M\to SO(N)$ and we assume that $N$ is even and large enough. Let $E$ denote the trivial real vector bundle of rank $N$ over $M$. We equip $E$ with the canonical trivial metric and trivial connection $d$. Set
$$\nabla_u=d+ug^{-1}dg,\ \ u\in[0,1].$$
Let $R_u$ be the curvature of $\nabla_u$, then
\begin{equation}
  R_u=(u^2-u)(g^{-1}dg)^2.
\end{equation}
We also consider the complexification of $E$ and $g$ extends to a unitary automorphism of $E_{\mathbf{C}}$. The connection $\nabla_u$ extends to a Hermitian connection on $E_{\mathbf{C}}$ with curvature still given by (3.3). Let $\Delta(E)$ be the spinor bundle of $E$, which is a trivial Hermitian
bundle of rank $2^{\frac{N}{2}}$. We assume that $g$ has a lift to the Spin group ${\rm Spin}(N):g^{\Delta}:M\to {\rm Spin}(N)$. So $g^{\Delta}$ can be viewed as an automorphism of $\Delta(E)$ preserving the Hermitian metric. We lift $d$ on $E$ to be a trivial Hermitian connection $d^{\Delta}$ on $\Delta(E)$, then
\begin{equation}
  \nabla_u^{\Delta}=(1-u)d^{\Delta}+u(g^{\Delta})^{-1}\cdot d^{\Delta} \cdot g^{\Delta},\ \ u\in[0,1]
\end{equation}
lifts $\nabla_u$ on $E$ to $\Delta(E)$. Let $Q_j(E),j=1,2,3$ be the virtual bundles defined as follows:
\begin{equation}
Q_1(E)=\triangle(E)\otimes
   \bigotimes _{n=1}^{\infty}\wedge_{q^n}(\widetilde{E_C});
\end{equation}
\begin{equation}
Q_2(E)=\bigotimes _{n=1}^{\infty}\wedge_{-q^{n-\frac{1}{2}}}(\widetilde{E_C});
\end{equation}
\begin{equation}
Q_3(E)=\bigotimes _{n=1}^{\infty}\wedge_{q^{n-\frac{1}{2}}}(\widetilde{E_C}).
\end{equation}
Let $g$ on $E$ have a lift $g^{Q(E)}$ on $Q(E)$ and $\nabla_u$ have a lift $\nabla^{Q(E)}_u$ on $Q(E)$. Following \cite{HY}, we defined ${\rm ch}(Q(E),g^{Q(E)},d,\tau)$ as following
\begin{equation}
{\rm ch}(Q(E),\nabla^{Q(E)}_0,\tau)-{\rm ch}(Q(E),\nabla^{Q(E)}_1,\tau)=-d{\rm ch}(Q(E),g^{Q(E)},d,\tau),
\end{equation}
where
$$Q(E)=Q_1(E)\otimes Q_2(E)\otimes Q_3(E),$$
and
\begin{equation}
{\rm ch}(Q(E),g^{Q(E)},d,\tau)=-\frac{2^{\frac{N}{2}}}{8\pi^2}\int^1_0{\rm Tr}\left[g^{-1}dg\left(A\right)\right]du,
\end{equation}
with
$$A=\frac{\theta'_1(R_u/(4\pi^2),\tau)}{\theta_1(R_u/(4\pi^2),\tau)}
+\frac{\theta'_2(R_u/(4\pi^2),\tau)}{\theta_2(R_u/(4\pi^2),\tau)}+\frac{\theta'_3(R_u/(4\pi^2),\tau)}
{\theta_3(R_u/(4\pi^2),\tau)}.$$
By Proposition 2.2 in \cite{HY}, we have if $c_3(E_C,g,d)=0$, then for any integer $r\geq 1.$ We have ${\rm ch}(Q(E),g^{Q(E)},d,\tau+1)^{(4r-1)}={\rm ch}(Q(E),g^{Q(E)},d,\tau)^{(4r-1)}$and ${\rm ch}(Q(E),g^{Q(E)},d,-\frac{1}{\tau})^{(4r-1)}=\tau^{2r}{\rm ch}(Q(E),g^{Q(E)},d,\tau)^{(4r-1)}$,so ${\rm ch}(Q(E),g^{Q(E)},d,\tau)^{(4r-1)}$ are modular forms of weight $2r$ over $SL_2(\bf{Z})$.
\begin{defn}
The two variable elliptic genus of $M^{2d+1}$ with respect to $W$, which we denote by ${\rm Ell}(M,W,g,\tau,z)$ is defined by
\begin{equation}
{\rm Ell}(M,W,g,\tau,z):=\int_M \widehat{A}(M){\rm ch}(E(M,W,\tau,z)){\rm ch}(Q(E),g^{Q(E)},d,\tau),
\end{equation}
which is the index of the Topelitz operator associated to the Dirac operator twisted by $E(M,W,\tau,z)$ and $g^{Q(E)}$.
\end{defn}

Using the same calculations as in Lemma 3.4 in \cite{LiP} and
\begin{equation}
  {\widehat{A}}(M)=\prod_{i=1}^d\frac{\pi \sqrt{-1}x_i}{{\rm sinh }{\pi \sqrt{-1}x_i}}=\prod_{i=1}^d\frac{2\pi \sqrt{-1}x_i}{1-e^{-2\pi \sqrt{-1}x_i}}
  e^{-\sum_{i=1}^n\pi \sqrt{-1}x_i},
\end{equation}
 we have

\begin{lem}
\begin{align}
{\rm Ell}(M,W,g,\tau,z)=\int_Me^{-\frac{c_1(W)}{2}}\eta(\tau)^{3(d-l)}\prod_{i=1}^d\frac{2\pi \sqrt{-1}x_i}{\theta(\tau,x_i)}\prod_{j=1}^l\theta
(\tau,w_j-z){\rm ch}(Q(E),g^{Q(E)},d,\tau),
\end{align}
where
 \begin{equation}
 \eta(\tau):=q^{\frac{1}{24}}\cdot c=q^{\frac{1}{24}}\prod_{j=1}^{\infty}(1-q^j).
   \end{equation}
\end{lem}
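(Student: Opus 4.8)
The plan is to compute the differential-form integrand of ${\rm Ell}(M,W,g,\tau,z)$ explicitly and match it against the claimed expression, following the computation of Lemma~3.4 in \cite{LiP} almost verbatim; the only genuinely new input is the identity (3.12), which lets one trade the $\widehat{A}$-form for a (partial) Todd form. Throughout, the transgression factor ${\rm ch}(Q(E),g^{Q(E)},d,\tau)$ is carried along untouched, since it depends on $\tau$ and on the path $\nabla_u$ only and involves neither the Chern roots $x_i$ of $TM\otimes C$, nor the Chern roots $w_j$ of $W$, nor the variable $z$; all the work is in evaluating $\widehat{A}(M)\,{\rm ch}({\rm E}(M,W,\tau,z))$.

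First, I would expand ${\rm ch}({\rm E}(M,W,\tau,z))$ by multiplicativity of the Chern character together with (2.4) and (2.5). With $y=e^{2\pi\sqrt{-1}z}$, and after absorbing $y$ through the shift $w_j\mapsto w_j-z$, the $W$-part contributes $\prod_{j=1}^{l}\prod_{n\geq1}\bigl(1-e^{-2\pi\sqrt{-1}(w_j-z)}q^{n-1}\bigr)\bigl(1-e^{2\pi\sqrt{-1}(w_j-z)}q^{n}\bigr)$, while the symmetric-power part of $TM\otimes C$, whose formal Chern roots are $\pm2\pi\sqrt{-1}x_i$ together with a single $0$, contributes $\prod_{n\geq1}\bigl[(1-q^n)\prod_{i=1}^{d}(1-e^{2\pi\sqrt{-1}x_i}q^n)(1-e^{-2\pi\sqrt{-1}x_i}q^n)\bigr]^{-1}$.

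Second, I would recognise these infinite products as theta-function products by comparison with the definition (2.6). In the $W$-part, the $n=1$ factor of the first product is the ``zero mode'' $1-e^{-2\pi\sqrt{-1}(w_j-z)}=e^{-\pi\sqrt{-1}(w_j-z)}\bigl(e^{\pi\sqrt{-1}(w_j-z)}-e^{-\pi\sqrt{-1}(w_j-z)}\bigr)$; peeling it off leaves exactly the product occurring in $\theta(w_j-z,\tau)$, so the $W$-part becomes, up to powers of $q^{1/8}$ and of $c=\prod_{j\geq1}(1-q^j)$ and up to a universal constant, $\prod_{j}\theta(w_j-z,\tau)$ times the exponential $\prod_{j}e^{-\pi\sqrt{-1}(w_j-z)}$. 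Since the Chern roots $2\pi\sqrt{-1}w_j$ of $W$ sum to $c_1(W)$, this exponential equals $e^{-c_1(W)/2}\,y^{l/2}$, and the $y^{l/2}$ cancels the prefactor $y^{-l/2}$ in ${\rm E}(M,W,\tau,z)$, leaving the $e^{-c_1(W)/2}$ of the statement. For the $TM$-part, I would first use (3.12) to replace $\widehat{A}(M)$ by $\prod_{i}\frac{2\pi\sqrt{-1}x_i}{1-e^{-2\pi\sqrt{-1}x_i}}\cdot e^{-\sum_i\pi\sqrt{-1}x_i}$; the Todd denominator $1-e^{-2\pi\sqrt{-1}x_i}$ then supplies precisely the ``$n=0$'' factor missing from the $\theta(x_i,\tau)$-product, and the exponential $e^{-\sum_i\pi\sqrt{-1}x_i}$ cancels the factor $\prod_i e^{\pi\sqrt{-1}x_i}$ produced when those zero modes are rewritten through (2.6). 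Thus the symmetric-power part of $TM$ combined with $\widehat{A}(M)$ organises, up to powers of $q^{1/8}$ and $c$, into $\prod_{i}\frac{2\pi\sqrt{-1}x_i}{\theta(x_i,\tau)}$.

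Third, I would collect the scalars. The powers of $c$ — from the prefactor $c^{2(d-l)+1}$, from the $l$ theta-normalisations attached to the $w_j$, and from the $d$ theta-normalisations attached to the $x_i$ — add up to $c^{3(d-l)}$, and the leftover powers of $q^{1/8}$ add up to $q^{(d-l)/8}$; since $\eta(\tau)=q^{1/24}\,c$, their product is $\eta(\tau)^{3(d-l)}$, exactly as in the statement. Assembling $e^{-c_1(W)/2}$, $\eta(\tau)^{3(d-l)}$, $\prod_{i}\frac{2\pi\sqrt{-1}x_i}{\theta(x_i,\tau)}$, $\prod_{j}\theta(w_j-z,\tau)$ and the inert factor ${\rm ch}(Q(E),g^{Q(E)},d,\tau)$, and integrating over $M$, yields the claimed formula. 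The step requiring the most care is precisely this bookkeeping: keeping track of the off-by-one shift in the $q$-exponent between the $W^{*}$- and $W$-factors (and between the partial Todd form and the symmetric-power factor of $TM$), which is what generates the sine-type zero modes of the theta functions, and then checking that the residual powers of $q^{1/8}$ and $\prod_{j\geq1}(1-q^j)$ really combine into $\eta(\tau)^{3(d-l)}$ and not some other power.
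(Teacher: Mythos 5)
Your proposal is correct and follows essentially the same route as the paper, whose proof is simply the remark that one repeats the computation of Lemma~3.4 of \cite{LiP} using (3.12): expand ${\rm ch}({\rm E}(M,W,\tau,z))$ into infinite products via (2.4)--(2.5), recognise them as theta-quotients from (2.6), let (3.12) supply the missing zero mode of each $\theta(x_i,\tau)$ and the exponential that produces $e^{-c_1(W)/2}$ and cancels $y^{-l/2}$, carry the transgression term ${\rm ch}(Q(E),g^{Q(E)},d,\tau)$ along as an inert factor, and count powers of $q^{\frac{1}{8}}$ and $c$ to assemble $\eta(\tau)^{3(d-l)}$. The one point deserving a final check is the residual universal constant (a power of $\sqrt{-1}$ arising from the $2\sqrt{-1}\,{\rm sin}(\pi v)$ zero modes), which you rightly flag as ``up to a universal constant'' and which must cancel between the $W$-factors and the $TM$-factors for the formula to hold exactly as displayed.
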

Similarly to Theorem 3.2 in \cite{LiP}, we have
\begin{thm}
If $c_1(W)=0$ and the first Pontrjagin classes $p_1(M)=p_1(W)$ and the third cohomology group $H^3(M,R)=0$ and $M$ is simple connected spin manifold, then
the elliptic genus ${\rm Ell}(M,W,g,\tau,z)$ is a weak Jacobi form of weight $(d+1)-l$ and index $\frac{l}{2}$ which satisfied\\
{\rm (1)}${\rm Ell}(N,W,g,\tau,z)=0$ when $N$ is a boundary;\\
{\rm (2)} ${\rm Ell}(M_1\amalg M_2,W,g,\tau,z)={\rm Ell}(M_1,W,g,\tau,z)+{\rm Ell}(M_2,W,g,\tau,z)$;\\
{\rm (3)} ${\rm Ell}(M_1^{2n}\times M_2^{2m+1},g,\tau,z)={\rm Ell}(M_1^{2n},\tau,z){\rm Ell}(M_2^{2m+1},g,\tau,z)$ where ${\rm Ell}(M_1^{2n},\tau,z)$
is the two variable elliptic genus for even dimensional manifolds and $g:M_2\rightarrow GL(N,C).$
\end{thm}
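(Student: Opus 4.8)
The plan is to check, directly from the theta-function formula for $\mathrm{Ell}(M,W,g,\tau,z)$ proved in the preceding Lemma, the three transformation laws that characterise a weak Jacobi form of weight $(d+1)-l$ and index $\tfrac{l}{2}$: invariance under $T\colon\tau\mapsto\tau+1$, the modular law under $S\colon(\tau,z)\mapsto(-1/\tau,\,z/\tau)$, and the lattice law under $z\mapsto z+\lambda\tau+\mu$ with $\lambda,\mu\in\mathbf{Z}$; then the cusp (holomorphicity) condition; and finally (1)--(3). The only non-elementary input beyond the theta transformation laws of Section~2 is the modular behaviour of $\mathrm{ch}(Q(E),g^{Q(E)},d,\tau)$ recorded before Definition~3.1 (Proposition~2.2 of \cite{HY}): $\mathrm{ch}(Q(E),g^{Q(E)},d,\tau+1)^{(4r-1)}=\mathrm{ch}(Q(E),g^{Q(E)},d,\tau)^{(4r-1)}$ and $\mathrm{ch}(Q(E),g^{Q(E)},d,-1/\tau)^{(4r-1)}=\tau^{2r}\mathrm{ch}(Q(E),g^{Q(E)},d,\tau)^{(4r-1)}$. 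The hypotheses $H^3(M,\mathbf{R})=0$ and $M$ simply connected are what make this input usable: the former kills the cohomological $c_3$-anomaly obstructing modular invariance, and the latter lets the lifts $g^{\Delta},g^{Q(E)}$ exist.

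For the $T$-law I apply $\theta(v,\tau+1)=e^{\pi\sqrt{-1}/4}\theta(v,\tau)$, $\eta(\tau+1)=e^{\pi\sqrt{-1}/12}\eta(\tau)$, and the $T$-invariance of $\mathrm{ch}(Q(E),g^{Q(E)},d,\tau)^{(4r-1)}$ in each relevant degree. The accumulated root of unity is $e^{-\pi\sqrt{-1}d/4}$ from the $d$ denominator factors $\theta(x_i,\tau)$, times $e^{\pi\sqrt{-1}(d-l)/4}$ from $\eta^{3(d-l)}$, times $e^{\pi\sqrt{-1}l/4}$ from the $l$ factors $\theta(w_j-z,\tau)$; the three exponents cancel, so $\mathrm{Ell}(M,W,g,\tau+1,z)=\mathrm{Ell}(M,W,g,\tau,z)$.

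The main step is the $S$-law, which I carry out as in the proof of Theorem~3.2 of \cite{LiP}, now carrying along the extra factor $\mathrm{ch}(Q(E),g^{Q(E)},d,\tau)$. Substituting $(\tau,z)\mapsto(-1/\tau,z/\tau)$ and using $\theta(v,-1/\tau)=\tfrac{1}{\sqrt{-1}}\big(\tfrac{\tau}{\sqrt{-1}}\big)^{1/2}e^{\pi\sqrt{-1}\tau v^2}\theta(\tau v,\tau)$ with $v=x_i$ and $v=w_j-z/\tau$, $\eta(-1/\tau)=\big(\tfrac{\tau}{\sqrt{-1}}\big)^{1/2}\eta(\tau)$, and the $S$-law for $\mathrm{ch}(Q(E),g^{Q(E)},d,\tau)^{(4r-1)}$, one collects all factors. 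The exponential anomalies that appear are $e^{-\pi\sqrt{-1}\tau\sum_i x_i^2}$ and $e^{\pi\sqrt{-1}\tau\sum_j w_j^2}$, which cancel because $p_1(M)=p_1(W)$ gives $\sum_i x_i^2=\sum_j w_j^2$ in $H^4(M,\mathbf{R})$; a factor $e^{-2\pi\sqrt{-1}z\sum_j w_j}$, which equals $1$ because $c_1(W)=0$ gives $\sum_j w_j=0$; and the factor $e^{\pi\sqrt{-1}lz^2/\tau}=\exp\!\big(2\pi\sqrt{-1}\cdot\tfrac{l}{2}\cdot\tfrac{z^2}{\tau}\big)$, which is the Jacobi index term. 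Since rescaling each formal Chern root by $\tau$ multiplies a degree-$2p$ component by $\tau^p$, pairing the degree-$2s$ ``geometric'' part of the integrand with the degree-$(4r-1)$ part of $\mathrm{ch}(Q(E),g^{Q(E)},d,\tau)$, where $2s+(4r-1)=2d+1$ and hence $s+2r=d+1$, one finds that on the degree-$(2d+1)$ component all powers of $\tfrac{\tau}{\sqrt{-1}}$ and of $\tau$ collapse to $\tau^{\,s+2r-l}=\tau^{\,(d+1)-l}$. Thus $\mathrm{Ell}(M,W,g,-1/\tau,z/\tau)=\tau^{\,(d+1)-l}\exp\!\big(2\pi\sqrt{-1}\cdot\tfrac{l}{2}\cdot\tfrac{z^2}{\tau}\big)\mathrm{Ell}(M,W,g,\tau,z)$.

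For the lattice law, only $\prod_j\theta(w_j-z,\tau)$ depends on $z$; from $\theta(v+\mu,\tau)=(-1)^\mu\theta(v,\tau)$ and $\theta(v-\lambda\tau,\tau)=(-1)^\lambda q^{-\lambda^2/2}e^{2\pi\sqrt{-1}\lambda v}\theta(v,\tau)$ for $\lambda,\mu\in\mathbf{Z}$, together again with $\sum_j w_j=0$, one gets $\prod_j\theta\big(w_j-(z+\lambda\tau+\mu),\tau\big)=(-1)^{l(\lambda+\mu)}\exp\!\big(-2\pi\sqrt{-1}\cdot\tfrac{l}{2}(2\lambda z+\lambda^2\tau)\big)\prod_j\theta(w_j-z,\tau)$, which is exactly the index-$\tfrac{l}{2}$ law of Definition~2.3. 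Holomorphicity on the product of the upper half-plane with $\mathbf{C}$ and the cusp condition follow by inspecting $q$-expansions: the fractional powers $q^{(d-l)/8}$ from $\eta^{3(d-l)}$, $q^{l/8}$ from the $l$ factors $\theta(w_j-z,\tau)$, and $q^{-d/8}$ from the $d$ factors $\theta(x_i,\tau)^{-1}$ cancel, while $\mathrm{E}(M,W,\tau,z)$, the residual integral powers of $\prod_j(1-q^j)$, and $\mathrm{ch}(Q(E),g^{Q(E)},d,\tau)$ contribute only non-negative integral powers of $q$. Finally, (2) is immediate from additivity of $\int_{M_1\amalg M_2}$; (3) follows from the multiplicativity of $\widehat{A}$ and of the operations $\wedge_t,S_t$---hence of $\mathrm{E}(\cdot,\cdot,\tau,z)$---over products, plus Fubini on the degree-$(2d+1)$ component, the map $g$ and the bundle $E$ living only on the odd factor $M_2$; and (1) follows because the integrand is a product of a closed Chern--Weil form with the transgression form $\mathrm{ch}(Q(E),g^{Q(E)},d,\tau)$, which is itself closed since its two defining end connections have vanishing curvature, so the integral depends only on the cobordism class of $(M,W,g)$ and vanishes on a boundary. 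The step I expect to be the main obstacle is keeping the $\tau$-power and root-rescaling bookkeeping of the $S$-law honest across the mixed-degree integrand---correctly pairing the degree-$2s$ geometric factor with the degree-$(4r-1)$ Toeplitz factor---and confirming that the exponential anomalies are precisely those neutralised by $c_1(W)=0$ and $p_1(M)=p_1(W)$; a stray sign or factor of $2\pi$ there would be fatal.
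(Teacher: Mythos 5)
Your proof is correct and follows exactly the route the paper intends: the paper's entire "proof" is the phrase "Similarly to Theorem 3.2 in \cite{LiP}," i.e.\ Li's theta-function transformation argument augmented by the modularity of the Toeplitz transgression form ${\rm ch}(Q(E),g^{Q(E)},d,\tau)^{(4r-1)}$ from Proposition 2.2 of \cite{HY}, which is precisely what you carry out. Your degree bookkeeping (pairing the degree-$2s$ geometric factor with the degree-$(4r-1)$ Toeplitz factor so that $s+2r=d+1$ yields the weight $(d+1)-l$) is the one genuinely new point beyond \cite{LiP}, and you have it right.
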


We recall Proposition 3.5 in \cite{LiP}.
\begin{prop}(\cite{LiP}) Suppose a function $\varphi(\tau,z):\mathbb{H}\times \mathbb{C}\rightarrow \mathbb{C}$ satisfies
\begin{align}
\varphi(\frac{a\tau+b}{c\tau+d_0},\frac{z}{c\tau+d_0})=(c\tau+d_0)^{k}{\rm exp}(\frac{2\pi\sqrt{-1}mcz^2}{c\tau+d_0})\varphi(\tau,z);~~
\left(\begin{array}{cc}
\ a & b  \\
 c  & d_0
\end{array}\right)\in SL(2,Z).
\end{align}
We define
\begin{align}
&\Phi(\tau,z):={\rm exp}(-8\pi^2mG_2(\tau)z^2)\varphi(\tau,z):=\sum_{n\geq 0}a_n(\tau)\cdot z^n,
\end{align}
then these $a_n(\tau)$ are modular forms of weight $k+n$ over $SL(2,Z)$.
\end{prop}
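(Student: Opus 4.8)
The plan is to absorb the anomalous exponential factor $\exp\left(\frac{2\pi\sqrt{-1}mcz^2}{c\tau+d_0}\right)$ appearing in the hypothesis into the quasi-modular defect of the weight-$2$ Eisenstein series $G_2$. This converts $\varphi$ into the function $\Phi$, which then satisfies the transformation law of a Jacobi form of weight $k$ and index $0$ over $SL(2,\mathbf{Z})$; once this is in hand, the modularity of the Taylor coefficients $a_n(\tau)$ in the variable $z$ is immediate.

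First I would record the transformation law of $G_2$. Combining $E_2(\tau)=1-24\sum_{n\ge 1}\sigma_1(n)q^n$ with the classical quasi-modularity
$$E_2\left(\frac{a\tau+b}{c\tau+d_0}\right)=(c\tau+d_0)^2E_2(\tau)-\frac{6\sqrt{-1}}{\pi}c(c\tau+d_0)$$
and with $G_2(\tau)=-\frac{B_2}{4}E_2(\tau)=-\frac{1}{24}E_2(\tau)$, one gets
$$G_2\left(\frac{a\tau+b}{c\tau+d_0}\right)=(c\tau+d_0)^2G_2(\tau)+\frac{\sqrt{-1}}{4\pi}c(c\tau+d_0),\qquad \left(\begin{smallmatrix}a&b\\c&d_0\end{smallmatrix}\right)\in SL(2,\mathbf{Z}).$$
It is enough to check this on the generators $T$ and $S$, where it reduces to $G_2(\tau+1)=G_2(\tau)$ and $G_2(-\frac{1}{\tau})=\tau^2G_2(\tau)+\frac{\sqrt{-1}}{4\pi}\tau$.

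Next I would substitute $\tau\mapsto\frac{a\tau+b}{c\tau+d_0}$ and $z\mapsto\frac{z}{c\tau+d_0}$ into the prefactor $\exp(-8\pi^2mG_2(\tau)z^2)$ of $\Phi$. Using $8\pi^2\cdot\frac{1}{4\pi}=2\pi$ together with the transformation law above,
$$-8\pi^2m\,G_2\left(\frac{a\tau+b}{c\tau+d_0}\right)\frac{z^2}{(c\tau+d_0)^2}=-8\pi^2m\,G_2(\tau)z^2-\frac{2\pi\sqrt{-1}mcz^2}{c\tau+d_0},$$
so the prefactor picks up exactly the reciprocal of the anomalous exponential carried by $\varphi$. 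Multiplying the two transformation laws together, these exponentials cancel and one obtains
$$\Phi\left(\frac{a\tau+b}{c\tau+d_0},\frac{z}{c\tau+d_0}\right)=(c\tau+d_0)^k\,\Phi(\tau,z),\qquad\left(\begin{smallmatrix}a&b\\c&d_0\end{smallmatrix}\right)\in SL(2,\mathbf{Z}),$$
i.e. $\Phi$ transforms like an index-$0$ Jacobi form of weight $k$.

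Finally I would expand both sides in powers of $z$. Writing $\Phi(\tau,z)=\sum_{n\ge 0}a_n(\tau)z^n$, the left-hand side above becomes $\sum_{n\ge 0}a_n\!\left(\frac{a\tau+b}{c\tau+d_0}\right)(c\tau+d_0)^{-n}z^n$ while the right-hand side is $\sum_{n\ge 0}(c\tau+d_0)^ka_n(\tau)z^n$, so comparing the coefficient of $z^n$ gives
$$a_n\left(\frac{a\tau+b}{c\tau+d_0}\right)=(c\tau+d_0)^{k+n}a_n(\tau)$$
for every element of $SL(2,\mathbf{Z})$. Since $\varphi$ is holomorphic on $\mathbb{H}\times\mathbb{C}$ and $G_2$ is holomorphic on $\mathbb{H}$, the function $\Phi$ and therefore each $a_n$ is holomorphic on $\mathbb{H}$, and the growth condition at the cusp is inherited from that of $\varphi$ (in the applications $\varphi$ has a $q$-expansion with exponents bounded below, hence so does each $a_n$). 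Thus $a_n(\tau)$ is a modular form of weight $k+n$ over $SL(2,\mathbf{Z})$. The only point requiring care is fixing the constant $\frac{\sqrt{-1}}{4\pi}$, with the correct sign and normalization, in the transformation law of $G_2$, so that the $-8\pi^2m$ in the definition of $\Phi$ matches $2\pi$ and the anomalous exponentials cancel on the nose; the rest is routine bookkeeping with power series.
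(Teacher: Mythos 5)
Your proof is correct and is essentially the argument of the cited source: the paper itself only quotes this proposition from \cite{LiP}, and Li's proof is exactly this cancellation of the anomalous factor $\exp\bigl(\tfrac{2\pi\sqrt{-1}mcz^{2}}{c\tau+d_{0}}\bigr)$ against the quasi-modular defect of $G_{2}$, followed by comparing Taylor coefficients in $z$ (the classical Eichler--Zagier development of Jacobi forms). Your normalization $G_{2}(\gamma\tau)=(c\tau+d_{0})^{2}G_{2}(\tau)+\tfrac{\sqrt{-1}}{4\pi}c(c\tau+d_{0})$ is consistent with the paper's convention $G_{2}=-\tfrac{1}{24}E_{2}$, so the exponentials cancel exactly as you claim.
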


\begin{prop}If $c_1(W)=0$ and the first Pontrjagin classes $p_1(M)=p_1(W)$ and the third cohomology group $H^3(M,R)=0$ and $M$ is simple connected spin manifold, then the series $a_n(M,W,g,\tau)$ determined by
\begin{align}
{\rm exp}(-4\pi^2lG_2(\tau)z^2){\rm Ell}(M,W,g,\tau,z)=\sum_{n\geq 0}a_n(M,W,g,\tau)\cdot z^n,
\end{align}
are modular forms of weight $d+1-l+n$ over $SL(2,Z)$. Furthermore, the first five series of $a_n(M,W,g,\tau)$ are of the following form:
\begin{align}
&a_0(M,W,g,\tau)=\int_M\widehat{A}(TM){\rm ch}(\wedge_{-1}(W^*)){\rm ch}(\triangle(E),g,d)\\\notag
&+q
\left\{\int_M\widehat{A}(TM){\rm ch}(\wedge_{-1}(W^*)\otimes[-2(d-l)-1-W-W^*+TM\otimes C]){\rm ch}(\triangle(E),g,d)\right.\\\notag
&\left.+\int_M\widehat{A}(TM){\rm ch}(\wedge_{-1}(W^*)){\rm ch}(\triangle(E)\otimes(\widetilde{E_C}+2\wedge^2\widetilde{E_C}
-\widetilde{E_C}\otimes \widetilde{E_C}),g,d)\right\}
+O(q^2),
\end{align}

\begin{align}
&a_1(M,W,g,\tau)=\sum_{p=0}^l(-1)^p2\pi\sqrt{-1}(p-\frac{l}{2})\int_M\widehat{A}(TM){\rm ch}(\wedge^p(W^*)){\rm ch}(\triangle(E),g,d)
+a^1_1q+O(q^2),
\end{align}
where
\begin{align}
&a^1_1=-2\pi\sqrt{-1}\int_M\widehat{A}(TM){\rm ch}(\wedge_{-1}(W^*)\otimes W\otimes W^*){\rm ch}(\triangle(E),g,d)\\\notag
&+\pi\sqrt{-1}\int_M\widehat{A}(TM){\rm ch}([2\sum_{j=1}^l(-1)^jj\wedge^jW^*-l\wedge_{-1}W^*]\\\notag
&\otimes[-2(d-l)-1-W-W^*+TM\otimes C])
{\rm ch}(\triangle(E),g,d)\\\notag
&+\pi\sqrt{-1}\int_M\widehat{A}(TM){\rm ch}([2\sum_{j=1}^l(-1)^jj\wedge^jW^*-l\wedge_{-1}W^*])\\\notag
&\cdot{\rm ch}(\triangle(E)\otimes(\widetilde{E_C}+2\wedge^2\widetilde{E_C}
-\widetilde{E_C}\otimes \widetilde{E_C}),g,d).
\end{align}

\begin{align}
&a_2(M,W,g,\tau)=-\sum_{p=0}^l(-1)^p2\pi^2(p-\frac{l}{2})^2\int_M\widehat{A}(TM){\rm ch}(\wedge^p(W^*)){\rm ch}(\triangle(E),g,d)\\\notag
&+\sum_{p=0}^l(-1)^p\frac{l}{6}\pi^2\int_M\widehat{A}(TM){\rm ch}(\wedge^p(W^*)){\rm ch}(\triangle(E),g,d)+O(q),
\end{align}

\begin{align}
&a_3(M,W,g,\tau)=\sum_{p=0}^l(-1)^p\frac{1}{6}(2\pi\sqrt{-1})^3(p-\frac{l}{2})^3\int_M\widehat{A}(TM){\rm ch}(\wedge^p(W^*)){\rm ch}(\triangle(E),g,d)\\\notag
&-\sum_{p=0}^l(-1)^pl\frac{2}{3}\pi^4(p-\frac{l}{2})\int_M\widehat{A}(TM){\rm ch}(\wedge^p(W^*)){\rm ch}(\triangle(E),g,d)+O(q),
\end{align}

\begin{align}
&a_4(M,W,g,\tau)=\sum_{p=0}^l(-1)^p\frac{1}{24}(2\pi\sqrt{-1})^4(p-\frac{l}{2})^4\int_M\widehat{A}(TM){\rm ch}(\wedge^p(W^*)){\rm ch}(\triangle(E),g,d)\\\notag
&-\sum_{p=0}^l(-1)^p\frac{l}{3}\pi^4(p-\frac{l}{2})^2\int_M\widehat{A}(TM){\rm ch}(\wedge^p(W^*)){\rm ch}(\triangle(E),g,d)\\\notag
&+\sum_{p=0}^l(-1)^p\frac{\pi^4 l^2}{12}\int_M\widehat{A}(TM){\rm ch}(\wedge^p(W^*)){\rm ch}(\triangle(E),g,d)+O(q),
\end{align}
\end{prop}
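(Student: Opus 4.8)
The plan is to treat the modularity statement and the five explicit $q$--expansions separately.

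The modularity is immediate from the two results just recalled. By Theorem 3.3, whose hypotheses are exactly those assumed here, ${\rm Ell}(M,W,g,\tau,z)$ is a weak Jacobi form of weight $(d+1)-l$ and index $\tfrac{l}{2}$; in particular, as a function of $\tau$ and $z$ it satisfies the $SL(2,{\bf Z})$--transformation law (3.14) with $k=d+1-l$ and $m=\tfrac{l}{2}$. Since $-8\pi^{2}mG_{2}(\tau)z^{2}=-4\pi^{2}lG_{2}(\tau)z^{2}$, applying Proposition 3.4 to $\varphi={\rm Ell}(M,W,g,\tau,z)$ yields precisely that the coefficients $a_{n}(M,W,g,\tau)$ in (3.16) are modular forms of weight $d+1-l+n$ over $SL(2,{\bf Z})$; their holomorphy on ${\bf H}$ (and at the cusp) is inherited from the weak Jacobi form ${\rm Ell}$.

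For the explicit formulas I would expand the integrand of (3.10), with (3.2) substituted, as a double power series in $q$ and in $z$. Writing $y=e^{2\pi\sqrt{-1}z}$, the only $z$--dependence lies in $y^{-l/2}\bigotimes_{n\ge1}\big(\wedge_{-yq^{n-1}}(W^{*})\otimes\wedge_{-y^{-1}q^{n}}(W)\big)$, whose $q^{0}$--part is $y^{-l/2}\wedge_{-y}(W^{*})=\sum_{p=0}^{l}(-1)^{p}e^{2\pi\sqrt{-1}(p-l/2)z}\wedge^{p}W^{*}$; since ${\rm ch}(Q(E),g^{Q(E)},d,\tau)$ is defined termwise by the transgression (3.8)--(3.9) and $Q(E)|_{q^{0}}=\triangle(E)$, one already gets
\[ {\rm Ell}(M,W,g,\tau,z)|_{q^{0}}=\sum_{p=0}^{l}(-1)^{p}e^{2\pi\sqrt{-1}(p-l/2)z}\int_{M}\widehat{A}(TM){\rm ch}(\wedge^{p}W^{*})\,{\rm ch}(\triangle(E),g,d). \]
Setting $z=0$ (so $y=1$) and using $\wedge_{-1}(W^{*})=\sum_{p}(-1)^{p}\wedge^{p}W^{*}$ recovers the $q^{0}$--part of $a_{0}$; for its $q^{1}$--part one expands, through order $q$, $c^{2(d-l)+1}=1-(2(d-l)+1)q$, $\bigotimes_{n}\big(\wedge_{-q^{n-1}}(W^{*})\otimes\wedge_{-q^{n}}(W)\big)=\wedge_{-1}(W^{*})\otimes\big(1-q(W+W^{*})\big)$, $\bigotimes_{n}S_{q^{n}}(TM\otimes C)=1+q\,(TM\otimes C)$, and, using (2.3) together with the fact that in $Q_{2}(E)\otimes Q_{3}(E)=\bigotimes_{n}\big(\wedge_{-q^{n-1/2}}(\widetilde{E_{C}})\otimes\wedge_{q^{n-1/2}}(\widetilde{E_{C}})\big)$ all half--integral powers of $q$ cancel, $Q(E)=\triangle(E)\otimes\big(1+q(\widetilde{E_{C}}+2\wedge^{2}\widetilde{E_{C}}-\widetilde{E_{C}}\otimes\widetilde{E_{C}})\big)+O(q^{2})$; collecting the coefficient of $q$ in $\widehat{A}(M){\rm ch}({\rm E}(M,W,\tau,0)){\rm ch}(Q(E),g^{Q(E)},d,\tau)$ gives the stated $a_{0}$. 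The remaining $a_{n}$ exploit that $\exp(-4\pi^{2}lG_{2}(\tau)z^{2})$ is even in $z$: $a_{1}$ is just the $z$--linear coefficient of ${\rm Ell}$, and differentiating the above expansion at $z=0$ produces the factors $2\pi\sqrt{-1}(p-\tfrac{l}{2})$, with $a_{1}^{1}$ obtained by the same bookkeeping applied to the $z$--linear part; for $a_{2},a_{3},a_{4}$ only ${\rm Ell}|_{q^{0}}$ contributes (the higher $q$--terms falling into the stated $O(q)$), and since $G_{2}(\tau)=-\tfrac{1}{24}+q+O(q^{2})$ one multiplies ${\rm Ell}|_{q^{0}}$ by $\exp(\tfrac{\pi^{2}l}{6}z^{2})$ and reads off the coefficients of $z^{2},z^{3},z^{4}$.

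The difficulty here is combinatorial rather than conceptual: the computation parallels the even--dimensional case of \cite{LiP}, the only novelty being that the bundle $Q(E)$ and its Chern--Simons transgression ${\rm ch}(\cdot,g,d)$ replace the ordinary Chern character and enter throughout as an inert, $\tau$--dependent, multiplicative factor. The steps needing most care are (i) the coefficient of $q$ in ${\rm ch}(Q(E),g^{Q(E)},d,\tau)$, which requires expanding $Q_{1},Q_{2},Q_{3}$ individually (via $S_{t}=1/\wedge_{-t}$), checking the cancellation of half--integral powers of $q$ in $Q_{2}\otimes Q_{3}$, and keeping the transgression termwise; and (ii), for $a_{2},a_{3},a_{4}$, correctly collecting the cross terms between the $z$--expansions of ${\rm Ell}|_{q^{0}}$ and of $\exp(\tfrac{\pi^{2}l}{6}z^{2})$.
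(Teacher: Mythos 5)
Your proposal is correct and follows essentially the same route as the paper: modularity comes from combining Theorem 3.3 (the weak Jacobi form property) with Proposition 3.4, and the explicit formulas come from expanding the integrand of (3.10) in $q$ and $z$, using $Q(E)=\triangle(E)\otimes\bigl(1+q(\widetilde{E_C}+2\wedge^2\widetilde{E_C}-\widetilde{E_C}\otimes\widetilde{E_C})\bigr)+O(q^2)$ and treating ${\rm ch}(\cdot,g,d)$ as an inert multiplicative factor, exactly as the paper does by invoking the analogous computation of Proposition 3.2 in \cite{Wa1}. Your write-up is in fact more explicit than the paper's two-line proof about where the $q^{1/2}$-cancellation in $Q_2\otimes Q_3$ and the $G_2(\tau)=-\tfrac{1}{24}+q+\cdots$ expansion enter.
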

\begin{proof} We know that ${\rm Ell}(M,W,g,\tau,0)=a_0(M,W,g,\tau)$ and
\begin{equation}{\rm Ell}(M,W,g,\tau,0)=\int_M\widehat{A}(TM){\rm ch}(E(M,W,g,\tau,0)){\rm ch}(Q(E),g^{Q(E)},d),
\end{equation}
where
\begin{equation}
{\rm E}(M,W,\tau,0):=c^{2(d-l)+1}\wedge_{-1}(W^*)\bigotimes _{n=1}^{\infty}
\wedge_{-q^n}(W^*)\bigotimes\wedge_{-q^{n}}(W)
\bigotimes\left(\bigotimes _{n=1}^{\infty}S_{q^n}(TM\otimes C)\right).
\end{equation}
So we get (3.17). We know that
\begin{equation}
{\rm ch}(Q(E),g^{Q(E)},d)={\rm ch}(\triangle(E),g,d)+{\rm ch}(\triangle(E)\otimes(\widetilde{E_C}+2\wedge^2\widetilde{E_C}
-\widetilde{E_C}\otimes \widetilde{E_C}),g,d)q+O(q^2).
\end{equation}
Similar to Proposition 3.2 in \cite{Wa1}, then we can get Proposition 3.5 by (3.23) and (3.25).
\end{proof}

Since there are no $SL(2,Z)$ modular forms with the odd weight or the non zero weight $\leq 2$, we have
\begin{prop}
If $c_1(W)=0$ and the first Pontrjagin classes $p_1(M)=p_1(W)$ and the third cohomology group $H^3(M,R)=0$ and $M$ is a simple connected spin $(2d+1)$-manifold, then\\
1)if either $d+1-l$ is odd or $d+1-l\leq 2$ but $d+1-l\neq 0$, then
\begin{align}
&a^0_0=\int_M\widehat{A}(TM){\rm ch}(\wedge_{-1}(W^*)){\rm ch}(\triangle(E),g,d)=0\\\notag
&
a^1_0=\int_M\widehat{A}(TM){\rm ch}(\wedge_{-1}(W^*)\otimes[-2(d-l)-1-W-W^*+TM\otimes C]){\rm ch}(\triangle(E),g,d)\\\notag
&+\int_M\widehat{A}(TM){\rm ch}(\wedge_{-1}(W^*)){\rm ch}(\triangle(E)\otimes(\widetilde{E_C}+2\wedge^2\widetilde{E_C}
-\widetilde{E_C}\otimes \widetilde{E_C}),g,d)=0.
\end{align}
and the indices of associated Toeplitz operators are zero. \\
2)if either $d+1-l$ is even or $d+1-l\leq 1$ but $d+1-l\neq -1$, then
\begin{align}
a^0_1=\sum_{p=0}^l(-1)^p2\pi\sqrt{-1}(p-\frac{l}{2})\int_M\widehat{A}(TM){\rm ch}(\wedge^p(W^*)){\rm ch}(\triangle(E),g,d)=0,~~~~a^1_1=0.
\end{align}

3)if either $d+1-l$ is odd or $d+1-l\leq 0$ but $d+1-l\neq -2$, then
\begin{align}
&-\sum_{p=0}^l(-1)^p2\pi^2(p-\frac{l}{2})^2\int_M\widehat{A}(TM){\rm ch}(\wedge^p(W^*)){\rm ch}(\triangle(E),g,d)\\\notag
&+\sum_{p=0}^l(-1)^p\frac{l}{6}\pi^2\int_M\widehat{A}(TM){\rm ch}(\wedge^p(W^*)){\rm ch}(\triangle(E),g,d)=0.
\end{align}
4)if either $d+1-l$ is even or $d+1-l\leq -1$ but $d+1-l\neq -3$, then
\begin{align}
&\sum_{p=0}^l(-1)^p\frac{1}{6}(2\pi\sqrt{-1})^3(p-\frac{l}{2})^3\int_M\widehat{A}(TM){\rm ch}(\wedge^p(W^*)){\rm ch}(\triangle(E),g,d)\\\notag
&-\sum_{p=0}^l(-1)^pl\frac{2}{3}\pi^4(p-\frac{l}{2})\int_M\widehat{A}(TM){\rm ch}(\wedge^p(W^*)){\rm ch}(\triangle(E),g,d)=0.
\end{align}
5)if either $d+1-l$ is odd or $d+1-l\leq -2$ but $d+1-l\neq -4$, then
\begin{align}
&\sum_{p=0}^l(-1)^p\frac{1}{24}(2\pi\sqrt{-1})^4(p-\frac{l}{2})^4\int_M\widehat{A}(TM){\rm ch}(\wedge^p(W^*)){\rm ch}(\triangle(E),g,d)\\\notag
&-\sum_{p=0}^l(-1)^p\frac{l}{3}\pi^4(p-\frac{l}{2})^2\int_M\widehat{A}(TM){\rm ch}(\wedge^p(W^*)){\rm ch}(\triangle(E),g,d)\\\notag
&+\sum_{p=0}^l(-1)^p\frac{\pi^4 l^2}{12}\int_M\widehat{A}(TM){\rm ch}(\wedge^p(W^*)){\rm ch}(\triangle(E),g,d)=0.
\end{align}
Similarly, we have expressions using the twisted Toeplitz operators indices.
\end{prop}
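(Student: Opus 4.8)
The plan is to play off Proposition~3.5 against the classical structure of the ring of modular forms over $SL(2,{\bf Z})$. Recall that this ring, taken with trivial character, is ${\bf C}[E_4(\tau),E_6(\tau)]$; consequently the space of modular forms of a given weight $k$ over $SL(2,{\bf Z})$ vanishes whenever $k$ is odd, or $k<0$, or $k=2$, while in weight $0$ it consists only of the constants. Therefore, as soon as the weight $d+1-l+n$ of the modular form $a_n(M,W,g,\tau)$ furnished by Proposition~3.5 lands in this forbidden range, $a_n(M,W,g,\tau)$ must vanish identically, and then every coefficient of its $q$-expansion is zero. This is the whole mechanism; the rest is bookkeeping.

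First I would check, for $n=0,1,2,3,4$ in turn, that the hypothesis imposed in part~$n{+}1$ of the Proposition is precisely the condition ``$d+1-l+n$ is odd, or $d+1-l+n\le 2$ but $d+1-l+n\ne 0$''. For $n=0$ this is immediate since the weight is $d+1-l$. For $n=1$ the weight is $d+2-l$, and ``$d+1-l$ even, or $d+1-l\le 1$ but $\ne -1$'' says exactly that $d+2-l$ is odd, or $\le 2$ but $\ne 0$; the cases $n=2,3,4$ are the same translation shifted by $2,3,4$. In each case Proposition~3.5 together with the structure theorem gives $a_n(M,W,g,\tau)\equiv 0$. It then remains to extract the relevant Fourier coefficients from the explicit $q$-expansions (3.17)--(3.21): for $n=0$, killing the $q^0$- and $q^1$-coefficients yields the two identities in (3.30); for $n=1$, killing the $q^0$- and $q^1$-coefficients yields $a^0_1=0$ and $a^1_1=0$, that is (3.31); for $n=2,3,4$, killing the $q^0$-coefficient yields (3.32), (3.33) and (3.34) respectively.

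For the index-theoretic statements one invokes Definition~3.2, together with the identity ${\rm Ell}(M,W,g,\tau,0)=a_0(M,W,g,\tau)$ already used in the proof of Proposition~3.5: the number $a^0_0=\int_M\widehat{A}(TM)\,{\rm ch}(\wedge_{-1}(W^*))\,{\rm ch}(\triangle(E),g,d)$ is the constant ($q^0$) term of that index, hence is itself the index of the Toeplitz operator built by twisting the Dirac operator of $M$ by the lowest piece of ${\rm E}(M,W,\tau,z)$ and by $g^{Q(E)}$ in the sense of \cite{HY}; vanishing of $a_0$ forces this index, and coefficient by coefficient all the associated Toeplitz indices, to vanish, and the various sums $\sum_p(-1)^p\cdots$ in (3.31)--(3.34) are correspondingly reinterpreted as indices of Toeplitz operators twisted by the $\wedge^p(W^*)$, giving the ``twisted Toeplitz operator'' versions. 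I do not anticipate a real analytic obstacle here, the argument being the standard ``a modular form of forbidden weight vanishes'' principle; the only two points requiring care are (i) confirming that the $a_n(M,W,g,\tau)$ carry the \emph{trivial} character, so that odd weights and weight $2$ genuinely force vanishing (this holds because they arise from the polynomial construction of Proposition~3.4 applied to $G_2$, $E_4$, $E_6$ and the theta-quotients of Section~2, all of trivial character), and (ii) matching each displayed identity (3.30)--(3.34) with the correct Fourier coefficient of the correct $a_n$, i.e.\ keeping the shift between the subscript $n$ and the extracted power of $q$ straight.
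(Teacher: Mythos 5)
Your proposal is correct and follows exactly the paper's (essentially one-line) argument: Proposition~3.5 makes each $a_n(M,W,g,\tau)$ an $SL(2,{\bf Z})$-modular form of weight $d+1-l+n$, and the stated hypotheses in parts 1)--5) are precisely the translations of ``weight odd, or weight $\le 2$ but $\ne 0$,'' which forces $a_n\equiv 0$ and hence the vanishing of each Fourier coefficient. Your additional care about the trivial character and the shift between $n$ and the weight is a faithful filling-in of details the paper leaves implicit.
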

\begin{thm}If $c_1(W)=0$ and the first Pontrjagin classes $p_1(M)=p_1(W)$ and the third cohomology group $H^3(M,R)=0$ and $M$ is a simple connected spin $(2d+1)$-manifold, then\\
1)if $d+1-l=4$, then $a_0^1=240a^0_0$ and $a^1_0$ is the integer multiple of $240$.\\
2)if $d+1-l=6$, then $a_0^1=-504a^0_0$ and $a^1_0$ is the integer multiple of $504$.\\
3)if $d+1-l=8$, then $a_0^1=480a^0_0$ and $a^1_0$ is the integer multiple of $480$.\\
4)if $d+1-l=10$, then $a_0^1=-264a^0_0$ and $a^1_0$ is the integer multiple of $264$.\\
5)if $d+1-l=3$, then $a_1^1=240a^0_1$ and $a^1_1$ is the integer multiple of $240$.\\
6)if $d+1-l=5$, then $a_1^1=-504a^0_1$ and $a^1_1$ is the integer multiple of $504$.\\
7)if $d+1-l=7$, then $a_1^1=480a^0_1$ and $a^1_1$ is the integer multiple of $480$.\\
8)if $d+1-l=9$, then $a_1^1=-264a^0_1$ and $a^1_1$ is the integer multiple of $264$.\\
\end{thm}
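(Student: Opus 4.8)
The plan is to feed the modularity from Proposition 3.5 into the very small known structure of the ring of modular forms over $SL(2,{\bf Z})$ in weights $4,6,8,10$, and then match the first two $q$-coefficients.

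First I would recall from Proposition 3.5 that $a_0(M,W,g,\tau)$ is a modular form of weight $d+1-l$ over $SL(2,{\bf Z})$ and $a_1(M,W,g,\tau)$ one of weight $d+1-l+1$, together with the displayed low-order expansions $a_0=a_0^0+a_0^1q+O(q^2)$ and $a_1=a_1^0+a_1^1q+O(q^2)$. Next I would invoke the classical structure theory: since the lowest-weight nonzero cusp form has weight $12$, each of the spaces ${\mathcal M}_4,{\mathcal M}_6,{\mathcal M}_8,{\mathcal M}_{10}$ of modular forms over $SL(2,{\bf Z})$ is one-dimensional, spanned respectively by $E_4,\ E_6,\ E_4^2,\ E_4E_6$; I would record the $q$-coefficients $E_4=1+240q+\cdots$, $E_6=1-504q+\cdots$, $E_4^2=1+480q+\cdots$, $E_4E_6=1-264q+\cdots$ (equivalently $E_8$ and $E_{10}$).

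For part 1), where $d+1-l=4$, $a_0$ has weight $4$, so $a_0(M,W,g,\tau)=\lambda E_4$ for a constant $\lambda$; matching constant terms gives $\lambda=a_0^0$ and matching the coefficients of $q$ gives $a_0^1=240\,a_0^0$. Parts 2)--4) are the same argument with $E_4$ replaced by $E_6$, $E_4^2$, $E_4E_6$, producing $a_0^1=-504\,a_0^0$, $480\,a_0^0$, $-264\,a_0^0$. For parts 5)--8) I would run the identical argument on $a_1$: when $d+1-l$ equals $3,5,7,9$ the weight of $a_1$ is $4,6,8,10$, so $a_1(M,W,g,\tau)$ equals $a_1^0$ times the corresponding one of $E_4,E_6,E_4^2,E_4E_6$, and matching the coefficient of $q$ yields $a_1^1=240\,a_1^0$, $-504\,a_1^0$, $480\,a_1^0$, $-264\,a_1^0$.

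Finally I would record the integrality. The $q$-coefficients of ${\rm Ell}(M,W,g,\tau,0)=a_0(M,W,g,\tau)$ are indices of the Toeplitz operators of Definition 3.1 (the Dirac operator twisted by an integral virtual bundle and by $g^{Q(E)}$), so $a_0^0\in{\bf Z}$; hence $a_0^1$ is an integer multiple of $240,504,480,264$ in cases 1)--4). For cases 5)--8) I would rewrite $a_1^0=\pi\sqrt{-1}\sum_{p=0}^l(-1)^p(2p-l)I_p$ where $I_p=\int_M\widehat{A}(TM){\rm ch}(\wedge^p(W^*)){\rm ch}(\triangle(E),g,d)\in{\bf Z}$ is again an index of a Toeplitz operator, so $a_1^0/(\pi\sqrt{-1})\in{\bf Z}$ and therefore $a_1^1/(\pi\sqrt{-1})$ is the stated integer multiple of $240,504,480,264$. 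The one genuinely delicate point is this integrality and normalization bookkeeping, in particular tracking the factors of $2\pi\sqrt{-1}$ in $a_1^0$ so that ``integer multiple of $240$'' has the intended meaning; the modular-form input itself is immediate from Proposition 3.5.
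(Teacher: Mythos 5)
Your proposal is correct and follows essentially the same route as the paper, whose proof is simply ``By (2.28) and Proposition 3.5'': you use the modularity of $a_0,a_1$ from Proposition 3.5, the one-dimensionality of the spaces of $SL(2,{\bf Z})$-modular forms of weights $4,6,8,10$ spanned by $E_4,E_6,E_4^2,E_4E_6$, and compare the constant and $q^1$ coefficients via (2.28). Your extra care about the factor $\pi\sqrt{-1}$ in $a_1^0$ when interpreting ``integer multiple'' in cases 5)--8) is a sensible clarification of a point the paper leaves implicit.
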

\begin{proof}
By (2.28) and Proposition 3.5, we can prove Theorem 3.7.
\end{proof}

\section{Two-variable elliptic genera which are $\Gamma_0(2)$, $\Gamma^0(2)$, $\Gamma _{\theta}$-Jacobi forms}
\subsection{Even dimensional case}
Let $V$ be $(2r)$-dimensional spin real vector bundle on the almost complex $(2d)$-dimensional manifold $M$ and $\triangle(V)$ be the associated spinors bundle. We denote by
$\pm 2\pi \sqrt{-1}v_s~(1\leq s\leq r)$ the formal Chern roots of $V\otimes C$.
Let $Q_j(V)$, $j=1,2,3$ be the virtual bundles defined as following:
\begin{equation}
 Q_1(V)=\triangle(V)\otimes
   \bigotimes _{n=1}^{\infty}\wedge_{q^n}(\widetilde{V_C}),
Q_2(V)=\bigotimes _{n=1}^{\infty}\wedge_{-q^{n-\frac{1}{2}}}(\widetilde{V_C});~~
Q_3(V)=\bigotimes _{n=1}^{\infty}\wedge_{q^{n-\frac{1}{2}}}(\widetilde{V_C}).
\end{equation}
\begin{defn}
The elliptic genus of $(M^{2d},J)$ with respect to $W$ and $V$, which we denote by ${\rm Ell}_a(M,W,V,\tau,z)$ for $1\leq a\leq 3$ is defined by
\begin{equation}
{\rm Ell}_1(M,W,V,\tau,z):=\int_MTd(M){\rm ch}(E'(M,W,\tau,z)\otimes Q_1(V)),
\end{equation}
\begin{equation}
{\rm Ell}_2(M,W,V,\tau,z):=2^r\int_MTd(M){\rm ch}(E'(M,W,\tau,z)\otimes Q_2(V)),
\end{equation}
\begin{equation}
{\rm Ell}_3(M,W,V,\tau,z):=2^r\int_MTd(M){\rm ch}(E'(M,W,\tau,z)\otimes Q_3(V)),
\end{equation}
where
\begin{align}
{\rm E'}(M,W,\tau,z):=&c^{2(d-l)}y^{-\frac{l}{2}} \bigotimes _{n=1}^{\infty}
\left( \wedge_{-yq^{n-1}}(W^*)\wedge_{-y^{-1}q^{n}}(W)\right)\\\notag
&\bigotimes\left(\bigotimes _{n=1}^{\infty}S_{q^n}(TM\otimes C)\right).
\end{align}
\end{defn}
Similar to Lemma 3.4 in \cite{LiP}, we have

\begin{lem}The following equality holds
\begin{align}
&{\rm Ell}_a(M,W,V,\tau,z)=2^r\int_M{\rm exp}(\frac{c_1(M)-c_1(W)}{2})\eta(\tau)^{3(d-l)}\\\notag
&\cdot\prod_{i=1}^d\frac{2\pi \sqrt{-1}x_i}{\theta(\tau,x_i)}\prod_{j=1}^l\theta
(\tau,w_j-z)\prod_{s=1}^{r}\frac{\theta_a(w_s,\tau)}{\theta_a(0,\tau)}, ~~1\leq a \leq 3.
\end{align}
\end{lem}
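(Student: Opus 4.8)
The plan is to carry out the same Chern-character computation as in Lemma~3.4 of \cite{LiP} (and in the corresponding lemma of Section~3), now with the extra factor $Q_a(V)$: I will substitute the infinite products (2.6)--(2.9) for the theta functions into ${\rm ch}(E'(M,W,\tau,z)\otimes Q_a(V))$ and collect the elementary prefactors. The only genuinely new ingredient is $Q_a(V)$, so I dispose of it first. Writing $\pm 2\pi\sqrt{-1}v_s$ $(1\le s\le r)$ for the formal Chern roots of $V\otimes C$, the relations (2.5) give
$$
{\rm ch}\Big(\bigotimes_{n\geq 1}\wedge_{q^n}(\widetilde{V_C})\Big)=\prod_{s=1}^r\prod_{n\geq 1}\frac{(1+e^{2\pi\sqrt{-1}v_s}q^n)(1+e^{-2\pi\sqrt{-1}v_s}q^n)}{(1+q^n)^2},
$$
and the analogous expressions, with $q^n$ replaced by $q^{n-1/2}$ (resp. $-q^{n-1/2}$), for $Q_3(V)$ (resp. $Q_2(V)$). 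Comparing with (2.7)--(2.9) and using ${\rm ch}(\triangle(V))=\prod_{s}(e^{\pi\sqrt{-1}v_s}+e^{-\pi\sqrt{-1}v_s})$ to absorb the cosine and the extra $2^r$, I obtain ${\rm ch}(Q_1(V))=2^r\prod_s\theta_1(v_s,\tau)/\theta_1(0,\tau)$ and ${\rm ch}(Q_a(V))=\prod_s\theta_a(v_s,\tau)/\theta_a(0,\tau)$ for $a=2,3$. Taking into account the explicit $2^r$ in (4.3)--(4.4), all three cases then reduce to
$$
{\rm Ell}_a(M,W,V,\tau,z)=2^r\int_M {\rm Td}(M)\,{\rm ch}(E'(M,W,\tau,z))\prod_{s=1}^r\frac{\theta_a(v_s,\tau)}{\theta_a(0,\tau)}.
$$

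Next I will evaluate ${\rm Td}(M)\,{\rm ch}(E'(M,W,\tau,z))$, which is the same computation as in \cite{LiP}. Grouping ${\rm Td}(M)=\prod_{i=1}^d\frac{2\pi\sqrt{-1}x_i}{1-e^{-2\pi\sqrt{-1}x_i}}$ with $\bigotimes_{n\geq1}S_{q^n}(TM\otimes C)$, rewriting $1-e^{-2\pi\sqrt{-1}x_i}=2\sqrt{-1}\,e^{-\pi\sqrt{-1}x_i}\sin(\pi x_i)$ and inserting the product (2.6), this factor becomes $\prod_{i=1}^d\frac{2\pi\sqrt{-1}x_i}{\theta(x_i,\tau)}$ times $e^{c_1(M)/2}$ (from $\sum_i 2\pi\sqrt{-1}x_i=c_1(M)$), a power $(q^{1/8}c)^{d}$, and a scalar constant. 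For the $W$-factor I write ${\rm ch}(\wedge_{-yq^{n-1}}(W^*))=\prod_j(1-e^{2\pi\sqrt{-1}(z-w_j)}q^{n-1})$ and ${\rm ch}(\wedge_{-y^{-1}q^n}(W))=\prod_j(1-e^{2\pi\sqrt{-1}(w_j-z)}q^n)$, peel off the $n=1$ term $1-e^{2\pi\sqrt{-1}(z-w_j)}$, and apply (2.6) with $v=w_j-z$; this produces $\prod_j\theta(w_j-z,\tau)$ times $e^{-c_1(W)/2}$, a power $(q^{1/8}c)^{-l}$, and a scalar, while the prefactor $y^{-l/2}$ is exactly what is needed to cancel the $e^{\pi\sqrt{-1}lz}$ that comes out.

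It remains to collect the scalar and $q$-power data. The prefactor $c^{2(d-l)}$ of $E'$ together with $(q^{1/8}c)^{d}(q^{1/8}c)^{-l}=q^{(d-l)/8}c^{d-l}$ combine to $q^{(d-l)/8}c^{3(d-l)}=\eta(\tau)^{3(d-l)}$, since $\eta(\tau)=q^{1/24}c$; the exponential correction terms combine to $e^{(c_1(M)-c_1(W))/2}$; and the residual scalar constants (powers of $\sqrt{-1}$ coming from the $TM$- and $W$-factors, together with the $\theta_a(0,\tau)$ normalizations) cancel. Substituting these three evaluations into the displayed formula for ${\rm Ell}_a$ above gives (4.6). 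I expect the main obstacle to be precisely this normalization bookkeeping — keeping track of the half-integral powers of $q$, the factors $\prod_{j\ge1}(1-q^j)$, and all the scalar constants so that they telescope cleanly into $\eta(\tau)^{3(d-l)}$ and $e^{(c_1(M)-c_1(W))/2}$; conceptually there is nothing beyond Lemma~3.4 of \cite{LiP}.
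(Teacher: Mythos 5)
Your proposal is correct and follows exactly the route the paper intends: the paper gives no proof beyond ``Similar to Lemma 3.4 in \cite{LiP},'' and your computation — substituting the theta-product expansions, identifying ${\rm ch}(Q_1(V))=2^r\prod_s\theta_1(v_s,\tau)/\theta_1(0,\tau)$ (with the $2^r$ absorbed from $\triangle(V)$, and the explicit $2^r$ in (4.3)--(4.4) covering $a=2,3$), and collecting the prefactors into $\eta(\tau)^{3(d-l)}$ and $e^{(c_1(M)-c_1(W))/2}$ — is precisely that calculation. (You also correctly read the $w_s$ in the stated product as the Chern roots $v_s$ of $V$, which is a typo in the statement.)
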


\begin{thm}
If $c_1(M)=c_1(W)=0$ and $p_1(V)=0$ and the first Pontrjagin classes $p_1(M)=p_1(W)$, then the elliptic genus ${\rm Ell}_1(M,W,V,\tau,z)$,
${\rm Ell}_2(M,W,V,\tau,z)$, ${\rm Ell}_3(M,W,V,\tau,z)$ are respectively weak $\Gamma_0(2)$, $\Gamma^0(2)$, $\Gamma _{\theta}$-Jacobi forms
of weight $d-l$ and index $\frac{l}{2}$.
\end{thm}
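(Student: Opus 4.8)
The plan is to reduce everything to the theta-function formula of Lemma~4.2 and then verify the axioms of Definition~2.3. Under the hypotheses $c_1(M)=c_1(W)=0$ the prefactor $\exp(\tfrac{c_1(M)-c_1(W)}{2})$ in that formula is $1$, so ${\rm Ell}_a(M,W,V,\tau,z)$ equals
\[
\Theta_a(\tau,z):=2^r\int_M\eta(\tau)^{3(d-l)}\prod_{i=1}^d\frac{2\pi\sqrt{-1}\,x_i}{\theta(x_i,\tau)}\,\prod_{j=1}^l\theta(w_j-z,\tau)\,\prod_{s=1}^r\frac{\theta_a(v_s,\tau)}{\theta_a(0,\tau)},
\]
and I must show that $\Theta_a$ satisfies the modular and elliptic transformation laws of Definition~2.3, with index $m=\tfrac l2$ and weight $d-l$ and with some character $\chi$, over $\Gamma_0(2)$, $\Gamma^0(2)$, $\Gamma_\theta$ for $a=1,2,3$, and that it is holomorphic on $\mathbf{H}\times\mathbf{C}$ with a $q$-expansion bounded below. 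This is the same scheme as the proof of Theorem~3.2 of \cite{LiP}; the only new feature is the factor $\prod_s\theta_a(v_s,\tau)/\theta_a(0,\tau)$ attached to $V$, which is a ratio of weight-$\tfrac12$ Jacobi objects in the variables $v_s$ and hence contributes weight $0$, leaving the weight $d-l$ unchanged.

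I would organize the modular step around the behaviour of the theta building blocks under the generators $S:\tau\mapsto-1/\tau$ and $T:\tau\mapsto\tau+1$, read off from the transformation formulas of Section~2: under $T$ one has $\theta\mapsto e^{\pi\sqrt{-1}/4}\theta$, $\theta_1\mapsto e^{\pi\sqrt{-1}/4}\theta_1$ and $\theta_2\leftrightarrow\theta_3$, while under $S$ one has $\theta_1\leftrightarrow\theta_2$, $\theta_3$ fixed, and $\theta,\eta$ pick up the usual $(\tau/\sqrt{-1})^{1/2}$ automorphy factors. The quadratic ``anomaly'' exponentials $\exp(\pi\sqrt{-1}\tau\,(\bullet)^2)$ produced by the $S$-laws of the three groups of theta factors combine, after expanding $(w_j-z)^2$, into one factor whose exponent is proportional to $-p_1(M)+p_1(W)$ (from $\sum x_i^2$ and $\sum w_j^2$), plus a term proportional to $c_1(W)\,z$ (the cross term $\sum w_j\,z$), plus $p_1(V)$ (from $\sum v_s^2$), plus $l z^2$; the hypotheses $p_1(M)=p_1(W)$, $c_1(W)=0$, $p_1(V)=0$ annihilate the first three, leaving precisely the index-$\tfrac l2$ factor $\exp(\pi\sqrt{-1}\tau l z^2)$ demanded by Definition~2.3 for $S$. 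Counting the accumulated half-integral powers of $\tau/\sqrt{-1}$, using $\eta(-1/\tau)=(\tau/\sqrt{-1})^{1/2}\eta(\tau)$ and $\theta'(0,\tau)=2\pi\eta(\tau)^3$ (the Jacobi identity (2.11) together with a standard $\eta$-product identity), produces the weight $d-l$ once the degree-$2d$ part of the integrand on $M^{2d}$ is extracted, exactly as in \cite{LiP}.

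Assembling these facts: $\Theta_1$ is invariant up to $\chi$ under $T$ and under $ST^2ST$, hence over $\Gamma_0(2)=\langle T,ST^2ST\rangle$; similarly $\Theta_2$ is invariant over $\Gamma^0(2)=\langle STS,T^2STS\rangle$ and $\Theta_3$ over the theta group $\Gamma_\theta=\langle S,T^2\rangle$. The point is that, chasing each generator word and using $\theta_1\leftrightarrow\theta_2$ under $S$, $\theta_2\leftrightarrow\theta_3$ under $T$, and $\theta_1$ (resp.\ $\theta_3$) fixed by $T$ (resp.\ $S$), one checks that each $\Theta_a$ returns to its own theta-type rather than being sent to one of the other two. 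For the elliptic law, among the theta factors only $\prod_{j=1}^l\theta(w_j-z,\tau)$ depends on $z$, so the shift $z\mapsto z+\lambda\tau+\mu$ is governed by the quasi-periodicity of $\theta(v,\tau)$ in $v$ (immediate from the product formula (2.6)); the $l$ factors, combined once more with $c_1(W)=0$ so that $\sum_jw_j=0$, yield exactly the index-$\tfrac l2$ elliptic transformation of Definition~2.3. For the weak/holomorphic condition, after extracting the explicit $c^{2(d-l)}$, $2^r$ and $y^{-l/2}$, the formal series $E'(M,W,\tau,z)$ and $Q_a(V)$ are $q$-power series with virtual vector-bundle coefficients and no negative powers of $q$, so ${\rm Ell}_a$ is holomorphic on $\mathbf{H}\times\mathbf{C}$ with $q$-expansion bounded below, i.e.\ a weak Jacobi form of weight $d-l$ and index $\tfrac l2$.

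The hard part is the anomaly bookkeeping in the modular step: one must track simultaneously the quadratic exponentials coming from every theta factor, the half-integral powers of $\tau/\sqrt{-1}$ (whose eighth roots of unity only enter through the character $\chi$), and---most delicately---verify that the composite generators $ST^2ST$ and $T^2STS$ fix $\Theta_1$, resp.\ $\Theta_2$, rather than permuting the three genera. It is precisely the permutation pattern of $\theta_1,\theta_2,\theta_3$ under $S$ and $T$ that makes this succeed, and it is the reason one obtains the three index-two congruence subgroups rather than all of $SL(2,\mathbf{Z})$; everything else is a routine transcription of the argument behind Theorem~3.2 of \cite{LiP}, carrying along the weight-zero factor $2^r\prod_s\theta_a(v_s,\tau)/\theta_a(0,\tau)$.
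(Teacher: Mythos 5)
Your proposal is correct and follows essentially the same route as the paper: the paper's proof consists of deriving from the theta transformation laws (2.12)--(2.24) exactly the eight identities in (4.7) --- the permutation of $\mathrm{Ell}_1,\mathrm{Ell}_2,\mathrm{Ell}_3$ under $T$ and $S$ together with the elliptic shifts --- and then invoking the generators of $\Gamma_0(2)$, $\Gamma^0(2)$, $\Gamma_\theta$. Your write-up merely spells out the anomaly-cancellation bookkeeping and the generator-chasing that the paper leaves implicit.
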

\begin{proof}
By (2.12)-(2.24), we have
we get ${\rm Ell}_a(M,W,V,\tau,z)$ satisfies the following transformation laws:
\begin{align}
&{\rm Ell}_1(M,W,\tau+1,z)={\rm Ell}_1(M,W,V,\tau,z),\\\notag
&{\rm Ell}_2(M,W,\tau+1,z)={\rm Ell}_3(M,W,V,\tau,z),\\\notag
&{\rm Ell}_3(M,W,\tau+1,z)={\rm Ell}_2(M,W,V,\tau,z),\\\notag
&{\rm Ell}_a(M,W,V,\tau,z+1)=(-1)^{l}{\rm Ell}_a(M,W,V,\tau,z),\\\notag
&{\rm Ell}_a(M,W,V,\tau,z+\tau)=(-1)^{l}{\rm exp}(-\pi\sqrt{-1}l(\tau+2z))
{\rm Ell}_a(M,W,V,\tau,z),\\\notag
&{\rm Ell}_1(M,W,V,-\frac{1}{\tau},\frac{z}{\tau})=\tau^{d-l}{\rm exp}(\pi\sqrt{-1}l\frac{z^2}{\tau})
{\rm Ell}_2(M,W,V,\tau,z),\\\notag
&{\rm Ell}_2(M,W,V,-\frac{1}{\tau},\frac{z}{\tau})=\tau^{d-l}{\rm exp}(\pi\sqrt{-1}l\frac{z^2}{\tau})
{\rm Ell}_1(M,W,V,\tau,z),\\\notag
&{\rm Ell}_3(M,W,V,-\frac{1}{\tau},\frac{z}{\tau})=\tau^{d-l}{\rm exp}(\pi\sqrt{-1}l\frac{z^2}{\tau})
{\rm Ell}_3(M,W,V,\tau,z).
\end{align}
By (4.7), we can prove Theorem 4.3.
\end{proof}
Similar to Proposition 3.5, we have
\begin{prop}
If $c_1(M)=c_1(W)=0$ and $p_1(V)=0$ and the first Pontrjagin classes $p_1(M)=p_1(W)$,  then the series $a_{n,1}(M,W,V,\tau)$,$a_{n,2}(M,W,V,\tau)$, $a_{n,3}(M,W,V,\tau)$ determined by
\begin{align}
{\rm exp}(-4\pi^2lG_2(\tau)z^2){\rm Ell}_\alpha(M,W,\tau,z)=\sum_{n\geq 0}a_{n,\alpha}(M,W,V,\tau)\cdot z^n,~~1\leq \alpha\leq 3
\end{align}
are modular forms of weight $d-l+n$ over $\Gamma_0(2)$, $\Gamma^0(2)$, $\Gamma _{\theta}$ respectively.
Let \begin{equation} a_{n,2}(M,W,V,\tau)=a_{n,2}^0+a_{n,2}^{\frac{1}{2}}q^{\frac{1}{2}}+a_{n,2}^1q+a_{n,2}^{\frac{3}{2}}q^{\frac{3}{2}}+\cdots+a_{n,2}^m+\cdots.
\end{equation}
Then $a_{n,2}^0$ is the same as $a_n^0$ in \cite{LiP} or \cite{Wa1}, and
\begin{align}
&a_{0,2}^{\frac{1}{2}}=-\int_MTd(M)\sum_{p=0}^l(-1)^p{\rm ch}(\wedge^p(W^*)\otimes\widetilde{V_C}),\\\notag
&a_{1,2}^{\frac{1}{2}}=-\int_MTd(M)\sum_{p=0}^l(-1)^p{\rm ch}(\wedge^p(W^*)\otimes\widetilde{V_C})(p-\frac{l}{2})2\pi\sqrt{-1},\\\notag
&a_{2,2}^{\frac{1}{2}}=\int_MTd(M)\sum_{p=0}^l(-1)^p{\rm ch}(\wedge^p(W^*)\otimes\widetilde{V_C})(p-\frac{l}{2})^22\pi^2\\\notag
&-\int_MTd(M)\sum_{p=0}^l(-1)^p{\rm ch}(\wedge^p(W^*)\otimes\widetilde{V_C})\frac{l}{6}\pi^2\\\notag
&a_{3,2}^{\frac{1}{2}}=-\int_MTd(M)\sum_{p=0}^l(-1)^p{\rm ch}(\wedge^p(W^*)\otimes\widetilde{V_C})(p-\frac{l}{2})^3\frac{4}{3}(\pi\sqrt{-1})^3\\\notag
&-\int_MTd(M)\sum_{p=0}^l(-1)^p{\rm ch}(\wedge^p(W^*)\otimes\widetilde{V_C})\frac{l}{3}\pi^3(p-\frac{l}{2})\sqrt{-1}£¬\\\notag
&a_{4,2}^{\frac{1}{2}}=-\int_MTd(M)\sum_{p=0}^l(-1)^p{\rm ch}(\wedge^p(W^*)\otimes\widetilde{V_C})(p-\frac{l}{2})^4\frac{2}{3}\pi^4\\\notag
&+\int_MTd(M)\sum_{p=0}^l(-1)^p{\rm ch}(\wedge^p(W^*)\otimes\widetilde{V_C})\frac{l^2}{3}\pi^4(p-\frac{l}{2})^2\\\notag
&-\frac{\pi^4l^2}{72}\int_MTd(M)\sum_{p=0}^l(-1)^p{\rm ch}(\wedge^p(W^*)\otimes\widetilde{V_C}).
\end{align}
\begin{align}
&a_{1,2}^{1}=\int_MTd(M)\sum_{p=0}^l(-1)^p{\rm ch}(\wedge^p(W^*)\otimes\wedge^2\widetilde{V_C})(p-\frac{l}{2})2\pi\sqrt{-1},\\\notag
&+\int_MTd(M){\rm ch}\left\{[-2(d-l)+T+T^*-(W+W^*)]2\pi\sqrt{-1}\otimes \sum_{j=1}^l(-1)^jj\wedge^jW^*\right.\\\notag
&\left.+(1+\wedge_{-1}W^*)[-2\pi\sqrt{-1}(W+W^*)-\pi\sqrt{-1}l[-2(d-l)+T+T^*-(W+W^*)]]\right\}.
\end{align}
\end{prop}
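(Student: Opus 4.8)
The plan is to reduce everything to the modular-transformation properties already collected in Section 2 and to the Fourier-expansion machinery of Proposition 3.4 adapted to the congruence subgroups, exactly as in the proof of Proposition 3.5. First I would observe that the claimed coefficients $a_{n,\alpha}(M,W,V,\tau)$ are, by Theorem 4.3, weak Jacobi forms of weight $d-l$ and index $\tfrac{l}{2}$ over the relevant lattice-times-subgroup; applying the $G_2$-correction trick of Proposition 3.4 (with the $SL(2,\mathbb{Z})$-modularity replaced by the $\Gamma_0(2)$, $\Gamma^0(2)$, $\Gamma_\theta$-modularity recorded in (4.7)) yields that each $a_{n,\alpha}(M,W,V,\tau)$ is a modular form of weight $d-l+n$ over $\Gamma_0(2)$, $\Gamma^0(2)$, $\Gamma_\theta$ respectively. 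Here one uses that the quasi-modular anomaly of $G_2$ is the same on the congruence subgroups as on the full modular group, so the correction factor $\exp(-4\pi^2 l G_2(\tau)z^2)$ does the job uniformly; the index-$\tfrac{l}{2}$ part of the transformation law is what cancels the $G_2$-anomaly, just as in Proposition 3.4.

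Next I would compute the Fourier expansions of the explicit coefficients. Using Lemma 4.2, write
\[
{\rm Ell}_2(M,W,V,\tau,z)=2^r\int_M{\rm exp}\Bigl(\tfrac{c_1(M)-c_1(W)}{2}\Bigr)\eta(\tau)^{3(d-l)}\prod_{i=1}^d\frac{2\pi\sqrt{-1}x_i}{\theta(\tau,x_i)}\prod_{j=1}^l\theta(\tau,w_j-z)\prod_{s=1}^r\frac{\theta_3(v_s,\tau)}{\theta_3(0,\tau)},
\]
and expand each factor in $q^{1/2}$ using the product formulas (2.6)--(2.9). The factor $\prod_s \theta_3(v_s,\tau)/\theta_3(0,\tau)={\rm ch}\bigl(\bigotimes_{n\geq 1}\wedge_{q^{n-1/2}}(\widetilde{V_C})\bigr)$ contributes the leading $q^0$-term $1$ and the first $q^{1/2}$-term $\sum_p(-1)^p\,{\rm ch}(\wedge^p(W^*)\otimes\widetilde{V_C})$ after combining with the $\wedge_{-1}(W^*)$ coming from the $z\to 0$ limit of $\prod_j\theta(\tau,w_j-z)$; the $q^1$-term brings in $\wedge^2\widetilde{V_C}$ and the $TM\otimes\mathbb C - W - W^*$ corrections from $\eta^{3(d-l)}$ and $\prod_i 2\pi\sqrt{-1}x_i/\theta(\tau,x_i)$. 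Extracting the coefficient of $z^n$ after multiplying by $\exp(-4\pi^2 l G_2(\tau)z^2)$ — and using $\theta(\tau,w_j-z)=-\theta'(0,\tau)(z-w_j)+\cdots$ so that the $z$-expansion produces the polynomial factors $(p-\tfrac l2)^n$ via $\sum_p(-1)^p\binom{l}{p}(\cdots)$ and the standard identity $\sum_p(-1)^p\binom{l}{p}e^{2\pi\sqrt{-1}(p-l/2)z}=(-2\sqrt{-1}\sin\pi z)^l$ type manipulations — gives the displayed formulas for $a^{1/2}_{n,2}$ ($0\leq n\leq 4$) and for $a^{1}_{1,2}$. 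That $a^0_{n,2}$ agrees with $a^0_n$ of \cite{LiP}, \cite{Wa1} is immediate since the $q^0$-coefficient of $\prod_s\theta_3(v_s,\tau)/\theta_3(0,\tau)$ is $1$, so setting $q^0$ kills all $V$-dependence and returns Li's / the author's earlier even-dimensional genus.

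The main obstacle is purely bookkeeping: keeping track of which power of $q^{1/2}$ each of the four infinite products contributes and assembling the $z$-Taylor coefficients correctly, including the contribution of $G_2(\tau)=-\tfrac{1}{24}+O(q)$ to the constant terms (this is where the $\tfrac{l}{6}\pi^2$, $\tfrac{l}{3}\pi^3$, $\tfrac{l^2}{3}\pi^4$, $\tfrac{\pi^4 l^2}{72}$ pieces originate, via $-4\pi^2 l(-\tfrac1{24})z^2=\tfrac{\pi^2 l}{6}z^2$ and its powers). I would organize this by first recording the $z$-expansion of $\prod_j\theta(\tau,w_j-z)/\prod_i\theta(\tau,x_i)$ to order $z^4$ with $q$-coefficients to order $q$, then multiplying in $\eta^{3(d-l)}$, $\prod_s\theta_3(v_s,\tau)/\theta_3(0,\tau)$ and the exponential correction, and finally reading off coefficients. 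No new idea beyond Lemma 4.2, (4.7), and Proposition 3.4 is needed; the proof is "similar to Proposition 3.5" precisely in that these three inputs are combined in the same way, so I would present it by indicating the modified inputs and quoting the earlier argument verbatim for the rest.
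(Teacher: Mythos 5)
Your proposal follows essentially the same route as the paper: the modularity of the $a_{n,\alpha}$ is read off from the transformation laws (4.7) together with the $G_2$-correction mechanism of Proposition 3.4, and the explicit coefficients are obtained by multiplying the $q^{\frac{1}{2}}$-expansion $1-\widetilde{V_C}q^{\frac{1}{2}}+\wedge^2\widetilde{V_C}q+\cdots$ of $Q_2(V)$ against the $(q,z)$-expansions of the untwisted genus and of ${\rm exp}(-4\pi^2 l G_2(\tau)z^2)$, which is exactly the paper's $A_i$, $B_j$, $\widetilde{B_j}$ bookkeeping. The only slip is writing $\theta_3(v_s,\tau)/\theta_3(0,\tau)$ for the $V$-factor of ${\rm Ell}_2$, where the paper's conventions require $\theta_2$ (corresponding to $\wedge_{-q^{n-\frac{1}{2}}}(\widetilde{V_C})$); your subsequent expansion nevertheless carries the correct signs, so this does not affect the argument.
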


\begin{proof}
 If we set
\begin{equation}{\rm exp}(-4\pi^2lG_2(\tau)z^2):=A_0(Z)+A_1(z)q+O(q^2),
\end{equation}
and
\begin{equation}{\rm Ell}(M,W,\tau,z):=\widetilde{B_0}(Z)+\widetilde{B_1(z)}q+O(q^2),
\end{equation}
and
\begin{equation}{\rm Ell}_2(M,W,V,\tau,z):={B_0}(Z)+B_{\frac{1}{2}}(z)q^{\frac{1}{2}}+B_1(z)q+O(q^{\frac{3}{2}}),
\end{equation}
By
\begin{equation}Q_2(V)=1-\widetilde{V_C}q^{\frac{1}{2}}+\wedge^2\widetilde{V_C}q+O(q^{\frac{3}{2}}),
\end{equation}
then
\begin{equation}B_0(z)=\widetilde{B_0}(Z),~~{B_{\frac{1}{2}}}(z)=-\widetilde{B_0(z)}\otimes\widetilde{V_C},~~{B_1(z)}=\widetilde{B_0(z)}\otimes
\wedge^2\widetilde{V_C}+\widetilde{B_1(z)},
\end{equation}
we can get that
\begin{align}
&A_0(z)=1+\frac{l}{6}\pi^2z^2+\frac{l^2}{72}\pi^4z^4+O(z^6),\\\notag
&A_1(z)=-4l\pi^2z^2-\frac{2l^2}{3}\pi^4z^4+O(z^6),\\\notag
&\widetilde{B_0}(z)=\int_MTd(M)\sum_{p=0}^l(-1)^p{\rm ch}(\wedge^p(W^*))
\left[1+(p-\frac{l}{2})2\pi\sqrt{-1}z\right.\\\notag
&\left.+\frac{1}{2}(p-\frac{l}{2})^2(2\pi\sqrt{-1})^2z^2
+\frac{1}{6}(p-\frac{l}{2})^3(2\pi\sqrt{-1})^3z^3+O(z^4)\right],
\\\notag
&\widetilde{B}_1(z)=
\int_MTd(M){\rm ch}(\wedge_{-1}(W^*)\otimes[-2(d-l)+T+T^*-(W+W^*)]\\\notag
&-2\pi\sqrt{-1}\wedge_{-1}(W^*)\otimes(W+W^*)\\\notag
&+\pi\sqrt{-1}[2\sum_{j=1}^lj\wedge^jW^*-l\wedge_{-1}W^*]\otimes[-2(d-l)+T+T^*-(W+W^*)]z+O(z^2).
\end{align}
We know that
\begin{equation}
\sum_{n\geq 0}a_{n,2}^0z^n=A_0(z)B_0(z),~~\sum_{n\geq 0}a_{n,2}^{\frac{1}{2}}z^n=A_0(z)B_{\frac{1}{2}}(z),~~\sum_{n\geq 0}a_{n,2}^1z^n=A_0(z)B_1(z)+A_1(z)B_0(z),
\end{equation}
then we can get Proposition 4.4 by (4.14)-(4.16).
\end{proof}

\begin{prop}
If $c_1(W)=c_1(M)=0$ and the first Pontrjagin classes $p_1(M)=p_1(W)$ and $p_1(V)=0$ and $M$ is a almost complex $(2d)$-manifold, then\\
1)if either $d-l$ is odd or $d-l< 0$, then
\begin{align}
&a^0_{0,2}=\int_MTd(TM){\rm ch}(\wedge_{-1}(W^*))=0,\\\notag
&a^{\frac{1}{2}}_{0,2}=\int_MTd(TM){\rm ch}(\wedge_{-1}(W^*)\otimes\widetilde{V_C})=0,\\\notag
&
a^1_{0,2}=\int_MTd{\rm ch}(\wedge_{-1}(W^*)\otimes[-2(d-l)-W-W^*+TM\otimes C+\wedge^2\widetilde{V_C}])=0,\\\notag
&a^{\frac{3}{2}}_{0,2}=-\int_MTd(TM){\rm ch}(\wedge_{-1}(W^*)\otimes(\widetilde{V_C}+\wedge^3\widetilde{V_C}))\\\notag
&-\int_MTd(TM){\rm ch}(\wedge_{-1}(W^*)\otimes(-2(d-l)-W-W^*+TM\otimes C)\otimes\widetilde{V_C}])=0.
\end{align}
and associated holomorphic Euler numbers are zero. \\
2)if either $d-l$ is even or $d+1-l< 0$, then $a^0_{1,2}=a^{\frac{1}{2}}_{1,2}=a^1_{1,2}=0.$\\
3)if either $d-l$ is odd or $d+2-l< 0$, then $a^0_{2,2}=a^{\frac{1}{2}}_{2,2}=0.$\\
4)if either $d-l$ is even or $d+3-l< 0$, then $a^0_{3,2}=a^{\frac{1}{2}}_{3,2}=0.$\\
5)if either $d-l$ is odd or $d+4-l< 0$, then $a^0_{4,2}=a^{\frac{1}{2}}_{4,2}=0.$\\
Similarly, we have expressions using holomorphic Euler numbers.
\end{prop}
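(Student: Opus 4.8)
The plan is to run the same modular-forms vanishing argument as in Proposition~3.6, now over $\Gamma^0(2)$ in place of $SL_2({\bf Z})$. By Proposition~4.4, the functions $a_{n,2}(M,W,V,\tau)$ defined through $\exp(-4\pi^2lG_2(\tau)z^2){\rm Ell}_2(M,W,V,\tau,z)=\sum_{n\ge0}a_{n,2}(M,W,V,\tau)z^n$ are modular forms of weight $d-l+n$ over $\Gamma^0(2)$. By Lemma~2.2 one has $\mathcal{M}_{{\bf R}}(\Gamma^0(2))={\bf R}[\delta_2(\tau),\varepsilon_2(\tau)]$, generated by forms of weights $2$ and $4$; hence every nonzero modular form over $\Gamma^0(2)$ has even nonnegative weight, so a modular form over $\Gamma^0(2)$ of odd weight, or of negative weight, vanishes identically. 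Therefore, whenever $d-l+n$ is odd or negative, $a_{n,2}(M,W,V,\tau)\equiv0$, and in particular every Fourier coefficient $a^0_{n,2},a^{1/2}_{n,2},a^1_{n,2},a^{3/2}_{n,2},\dots$ of its $q^{1/2}$-expansion vanishes.

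The second step is the case analysis on the weight $d-l+n$. In (1), $n=0$ and the weight $d-l$ is odd or negative by hypothesis, so $a_{0,2}\equiv0$; this gives $a^0_{0,2}=a^{1/2}_{0,2}=a^1_{0,2}=a^{3/2}_{0,2}=0$. In (2), $n=1$ and the weight $d+1-l$ is odd when $d-l$ is even and negative when $d+1-l<0$, so $a_{1,2}\equiv0$, hence $a^0_{1,2}=a^{1/2}_{1,2}=a^1_{1,2}=0$. In (3), (4), (5), $n=2,3,4$ and the weight $d-l+n$ is odd precisely when $d-l$ has the stated parity ($d-l$ odd for $n$ even, $d-l$ even for $n$ odd) and negative precisely when $d+n-l<0$; in each case $a_{n,2}\equiv0$, which yields $a^0_{n,2}=a^{1/2}_{n,2}=0$. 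Only these Fourier coefficients were made explicit in Proposition~4.4, so only these are recorded, though of course all coefficients of a vanishing form are zero.

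To obtain the displayed equalities in (1)--(5) one substitutes the formulas already computed: $a^0_{n,2}=a^0_n$ with the explicit ${\rm Td}$--Chern-character integrals of \cite{LiP}, \cite{Wa1}, and $a^{1/2}_{n,2}$ equal to the integrals written out in Proposition~4.4; equating each of these modular forms to $0$ produces the identities line by line. The ``holomorphic Euler number'' reformulation is the Riemann--Roch--Hirzebruch/index identity $\int_M{\rm Td}(M){\rm ch}(\wedge_{-1}(W^*)\otimes F)=\chi\bigl(M,\wedge_{-1}(W^*)\otimes F\bigr)=\sum_{p=0}^{l}(-1)^p\chi\bigl(M,\wedge^p(W^*)\otimes F\bigr)$, valid for the auxiliary virtual bundles $F$ occurring above (built from $\widetilde{V_C}$, $W$, $W^*$, $TM\otimes C$); thus each vanishing is the vanishing of an alternating sum of holomorphic Euler characteristics, i.e.\ of indices of the corresponding twisted operators.

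I do not expect a genuine obstacle: the modular-vanishing input is immediate from Lemma~2.2. The only point requiring care is the bookkeeping --- matching, in each of the five cases, the parity/negativity hypothesis to the correct weight $d-l+n$, and checking that the coefficients listed are precisely those generated in the proof of Proposition~4.4 by multiplying the $q$-expansion of ${\rm Ell}(M,W,\tau,z)$ by the $Q_2(V)$-factor $1-\widetilde{V_C}q^{1/2}+\wedge^2\widetilde{V_C}q+\cdots$.
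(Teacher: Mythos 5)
Your proposal is correct and follows exactly the argument the paper intends (the paper states this proposition without an explicit proof, relying on the same reasoning as Proposition 3.6): $a_{n,2}$ is a modular form of weight $d-l+n$ over $\Gamma^0(2)$ by Proposition 4.4, and since ${\mathcal M}_{\bf R}(\Gamma^0(2))={\bf R}[\delta_2,\varepsilon_2]$ admits only even nonnegative weights, odd or negative weight forces $a_{n,2}\equiv 0$ and hence all its $q^{1/2}$-Fourier coefficients vanish. Your case-by-case matching of the parity/negativity hypotheses to the weight $d-l+n$, and the Riemann--Roch reformulation as vanishing holomorphic Euler characteristics, are both consistent with the paper.
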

\begin{thm}If $c_1(W)=c_1(M)=0$ and the first Pontrjagin classes $p_1(M)=p_1(W)$ and $p_1(V)=0$, then\\
1)if $d-l=2$, then
\begin{align}
&-\int_MTd(M)\sum_{p=0}^l(-1)^p{\rm ch}(\wedge^p(W^*)\otimes\widetilde{V_C})=24\int_MTd(M)\sum_{p=0}^l(-1)^p{\rm ch}(\wedge^p(W^*))\\\notag
&\int_MTd{\rm ch}(\wedge_{-1}(W^*)\otimes[-2(d-l)-W-W^*+TM\otimes C+\wedge^2\widetilde{V_C}])\\\notag
&=24\int_MTd(M)\sum_{p=0}^l(-1)^p{\rm ch}(\wedge^p(W^*)).
\end{align}
That is $a^{\frac{1}{2}}_{0,2}=24a^{0}_{0,2}$ and $a^{1}_{0,2}=24a^{0}_{0,2}$ and $a^{\frac{1}{2}}_{0,2}$, $a^{1}_{0,2}$ are the integer multiple of $24$.\\
2)if $d-l=4$, then $240a^{0}_{0,2}+8a^{\frac{1}{2}}_{0,2}-a^{1}_{0,2}=0$.\\
3)if $d-l=6$, then $504a^{0}_{0,2}-32a^{\frac{1}{2}}_{0,2}+a^{1}_{0,2}=0$.\\
4)if $d-l=1$, then $a^{\frac{1}{2}}_{1,2}=24a^{0}_{1,2}$ and $a^{1}_{1,2}=24a^{0}_{1,2}$ .\\
5)if $d-l=3$, then $240a^{0}_{1,2}+8a^{\frac{1}{2}}_{1,2}-a^{1}_{1,2}=0$.\\
6)if $d-l=5$, then $504a^{0}_{1,2}-32a^{\frac{1}{2}}_{1,2}+a^{1}_{1,2}=0$.\\
\end{thm}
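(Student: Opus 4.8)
The plan is to reduce the statement to the structure theory of modular forms over $\Gamma^0(2)$. By Proposition 4.4, under the stated hypotheses $a_{0,2}(M,W,V,\tau)$ is a modular form of weight $d-l$ over $\Gamma^0(2)$, and more generally $a_{1,2}(M,W,V,\tau)$ is a modular form of weight $d-l+1$. By Lemma 2.2, ${\mathcal{M}}_{{\bf R}}(\Gamma^0(2))={\bf R}[\delta_2(\tau),\varepsilon_2(\tau)]$ with $\delta_2$ of weight $2$ and $\varepsilon_2$ of weight $4$; hence the spaces of modular forms over $\Gamma^0(2)$ of weight $2$, $4$, $6$ are spanned respectively by $\{\delta_2\}$, $\{\delta_2^2,\varepsilon_2\}$, $\{\delta_2^3,\delta_2\varepsilon_2\}$. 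Since these spaces have dimension $1$, $2$, $2$, a form lying in one of them is completely determined by its first one (resp.\ two) Fourier coefficients in $q^{1/2}$, and every later coefficient is then a fixed linear combination of those; matching such relations against the explicit formulas for $a^0_{0,2},a^{1/2}_{0,2},a^1_{0,2}$ (resp.\ $a^0_{1,2},a^{1/2}_{1,2},a^1_{1,2}$) from Proposition 4.4 produces the asserted identities.

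For part (1), with $d-l=2$, write $a_{0,2}(M,W,V,\tau)=c\,\delta_2(\tau)$. Comparing constant terms in $8\delta_2(\tau)=-1-24q^{1/2}-24q+\cdots$ gives $c=-8a^0_{0,2}$, and then comparing the $q^{1/2}$- and $q$-coefficients yields $a^{1/2}_{0,2}=24a^0_{0,2}$ and $a^1_{0,2}=24a^0_{0,2}$; substituting the expressions of Proposition 4.4 for these coefficients gives precisely the two displayed equalities. For parts (2) and (3), with $d-l=4$ and $d-l=6$, expand the relevant bases to order $q$: $\delta_2^2=\tfrac{1}{64}+\tfrac34 q^{1/2}+\tfrac{39}{4}q+\cdots$ and $\varepsilon_2=q^{1/2}+8q+\cdots$ in weight $4$, and $\delta_2^3=-\tfrac{1}{512}-\tfrac{9}{64}q^{1/2}-\tfrac{225}{64}q+\cdots$ and $\delta_2\varepsilon_2=-\tfrac18 q^{1/2}-4q+\cdots$ in weight $6$. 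Writing $a_{0,2}$ in the appropriate basis, solving for the two unknown coefficients from the constant and $q^{1/2}$ terms in terms of $a^0_{0,2}$ and $a^{1/2}_{0,2}$, and reading off the coefficient of $q$ gives $240a^0_{0,2}+8a^{1/2}_{0,2}-a^1_{0,2}=0$ and $504a^0_{0,2}-32a^{1/2}_{0,2}+a^1_{0,2}=0$ respectively.

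Parts (4), (5), (6) are obtained by the identical argument applied to $a_{1,2}(M,W,V,\tau)$, which has weight $2$, $4$, $6$ when $d-l=1$, $3$, $5$ respectively, yielding the stated relations among $a^0_{1,2},a^{1/2}_{1,2},a^1_{1,2}$. Finally, the integrality claims in part (1) hold because $a^0_{0,2}=\int_M {\rm Td}(M){\rm ch}(\wedge_{-1}(W^*))$ is a holomorphic Euler characteristic number, hence an integer by the Riemann--Roch theorem, so $a^{1/2}_{0,2}$ and $a^1_{0,2}$, both equal to $24a^0_{0,2}$, are integer multiples of $24$. The only genuine work is the bookkeeping of these $q$-expansions; in particular one must keep track of the half-integer powers of $q$, which occur because modular forms over $\Gamma^0(2)$ have Fourier expansions in $q^{1/2}$.
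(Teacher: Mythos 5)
Your proposal is correct and follows essentially the same route as the paper: identify $a_{0,2}$ (resp.\ $a_{1,2}$) as a modular form of the appropriate weight over $\Gamma^0(2)$, expand it in the basis built from $\delta_2$ and $\varepsilon_2$ given by Lemma 2.2, and compare the coefficients of $1$, $q^{1/2}$, $q$. Your explicit $q$-expansions of $\delta_2^2$, $\delta_2^3$, $\delta_2\varepsilon_2$ check out and reproduce the stated linear relations, so the argument is complete (indeed more detailed than the paper's).
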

\begin{proof}
When $d-l=2$, then $a_{0,2}$ is a modular form of weight $2$ on $\Gamma^0(2)$. By Lemma 2.2, we can get (1). \\
\indent When $d-l=4$, then $a_{0,2}$ is a modular form of weight $4$ on $\Gamma^0(2)$. By Lemma 2.2, we can get $a_{0,2}=\lambda_1\varepsilon_2(\tau)
+\lambda_2(8\delta_2)^2.$ Then compare the coefficients of $1£¬~q^{\frac{1}{2}},~q$ on this equality, then we get (2). Similar, we can get other equalities.
\end{proof}

\subsection{Odd dimensional case}
In this subsection, we use the fundamental setting in Section 3. Let
\begin{equation}
{\rm ch}(Q_1(E),g^{Q_1(E)},d,\tau)=-\frac{2^{\frac{N}{2}}}{8\pi^2}\int^1_0{\rm Tr}\left[g^{-1}dg\left(\frac{\theta'_1(R_u/(4\pi^2),\tau)}{\theta_1(R_u/(4\pi^2),\tau)}\right)\right]du,
\end{equation}
\begin{equation}
{\rm ch}(Q_j(E),g^{Q_1(E)},d,\tau)=-\frac{1}{8\pi^2}\int^1_0{\rm Tr}\left[g^{-1}dg\left(\frac{\theta'_j(R_u/(4\pi^2),\tau)}{\theta_j(R_u/(4\pi^2),\tau)}\right)\right]du,~~j=2,3
\end{equation}
By Proposition 2.2 in \cite{HY}, we have if $c_3(E_C,g,d)=0$, then for any integer $r\geq 1.$ We have ${\rm ch}(Q_j(E),g^{Q_j(E)},d,\tau+1)^{(4r-1)}$ are modular forms of weight $2r$ over $\Gamma_0(2)$, $\Gamma^0(2)$, $\Gamma_{\theta}$ respectively.

\begin{defn}
The two variable elliptic genus of $M^{2d+1}$ with respect to $W$, which we denote by ${\rm Ell}_\alpha(M,W,g,\tau,z)$ for $\alpha=1,2,3$ is defined by
\begin{equation}
{\rm Ell}_\alpha(M,W,g,\tau,z):=\int_M \widehat{A}(M){\rm ch}(E(M,W,\tau,z)){\rm ch}(Q_\alpha(E),g^{Q_\alpha(E)},d,\tau),
\end{equation}
which is the index of the Topelitz operator associated to the Dirac operator twisted by $E(M,W,\tau,z)$ and $g^{Q_\alpha(E)}$.
\end{defn}
Using the same calculations as in Lemma 3.4 in \cite{LiP}, we have
\begin{lem}
\begin{align}
{\rm Ell}_\alpha(M,W,g,\tau,z)=\int_Me^{-\frac{c_1(W)}{2}}\eta(\tau)^{3(d-l)}\prod_{i=1}^d\frac{2\pi \sqrt{-1}x_i}{\theta(\tau,x_i)}\prod_{j=1}^l\theta
(\tau,w_j-z){\rm ch}(Q_\alpha(E),g^{Q_\alpha(E)},d,\tau),
\end{align}
\end{lem}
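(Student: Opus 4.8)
The plan is to prove the identity by the very same Chern--Weil computation that established Lemma 3.2 (equivalently, the computation of Lemma 3.4 in \cite{LiP}), the only change being the purely cosmetic replacement $Q(E)\rightsquigarrow Q_\alpha(E)$. This replacement costs nothing because in Definition 4.7 the factor ${\rm ch}(Q_\alpha(E),g^{Q_\alpha(E)},d,\tau)$ is a fixed differential form on $M$, built through (4.21)--(4.22) solely out of the curvatures $R_u$ of the connections $\nabla_u$ on the \emph{trivial} bundle $E$; it involves neither the Levi--Civita connection of $TM$ nor the connection of $W$, hence is inert under every manipulation of the formal Chern roots $\pm2\pi\sqrt{-1}x_i,\,0$ of $TM\otimes C$ and $2\pi\sqrt{-1}w_j$ of $W$. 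So it simply factors out of the integrand, and it suffices to re-derive the identity $\widehat{A}(M)\,{\rm ch}(E(M,W,\tau,z))=e^{-c_1(W)/2}\,\eta(\tau)^{3(d-l)}\prod_{i=1}^d\frac{2\pi\sqrt{-1}x_i}{\theta(x_i,\tau)}\prod_{j=1}^l\theta(w_j-z,\tau)$, which is exactly the manipulation already carried out inside the proof of Lemma 3.2 (there with the inert factor ${\rm ch}(Q(E),g^{Q(E)},d,\tau)$ instead).

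To do that I would: (i) expand ${\rm ch}\big(\bigotimes_nS_{q^n}(TM\otimes C)\big)$ and ${\rm ch}\big(\bigotimes_n\wedge_{-yq^{n-1}}(W^*)\otimes\wedge_{-y^{-1}q^n}(W)\big)$ into infinite products by (2.4)--(2.5), noting that in odd dimension $TM\otimes C$ carries the extra trivial root $0$, which contributes a factor $\prod_{n\ge1}(1-q^n)^{-1}$; (ii) substitute the expansion (3.11) of $\widehat{A}(M)$ and cancel its exponential $e^{-\sum_i\pi\sqrt{-1}x_i}$ against the $e^{\pi\sqrt{-1}x_i}$ concealed in $\tfrac{2\pi\sqrt{-1}x_i}{1-e^{-2\pi\sqrt{-1}x_i}}$; (iii) use $y=e^{2\pi\sqrt{-1}z}$ to rewrite $ye^{-2\pi\sqrt{-1}w_j}=e^{-2\pi\sqrt{-1}(w_j-z)}$ and $y^{-1}e^{2\pi\sqrt{-1}w_j}=e^{2\pi\sqrt{-1}(w_j-z)}$, and then recognise the resulting products, via the product formula (2.6), as $\prod_i\frac{2\pi\sqrt{-1}x_i}{\theta(x_i,\tau)}$ and $\prod_j\theta(w_j-z,\tau)$, up to explicit powers of $c=\prod_j(1-q^j)$, of $q^{1/8}$ and of normalising constants, plus an exponential $\prod_je^{-\pi\sqrt{-1}(w_j-z)}$; (iv) collect the bookkeeping: the prefactor $c^{2(d-l)+1}$, the trivial-root contribution, and the $c$'s and $q^{1/8}$'s released in step (iii) assemble into $\eta(\tau)^{3(d-l)}$ by (3.13), while the prefactor $y^{-l/2}$ together with $\prod_je^{-\pi\sqrt{-1}(w_j-z)}$ and the relation $\tfrac12c_1(W)=\pi\sqrt{-1}\sum_jw_j$ produce $e^{-c_1(W)/2}$. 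Reinstating the inert factor ${\rm ch}(Q_\alpha(E),g^{Q_\alpha(E)},d,\tau)$ and integrating over $M$ then gives the stated formula.

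There is no genuine obstacle; the only delicate part is the accounting of the powers of $c$ and $q^{1/8}$ and of the attendant normalising constants (factors of $2$ and $\sqrt{-1}$ coming from $\sinh$ versus $\sin$ and from $\theta'(0,\tau)$) in step (iv), and this accounting is literally the one already performed in the proof of Lemma 3.2 and in \cite{LiP}, so I would simply invoke it.
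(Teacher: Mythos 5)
Your proposal is correct and is essentially the paper's own argument: the paper proves this lemma simply by invoking ``the same calculations as in Lemma 3.4 in \cite{LiP}'' (i.e.\ the computation already used for Lemma 3.2), with the factor ${\rm ch}(Q_\alpha(E),g^{Q_\alpha(E)},d,\tau)$ factoring out of the integrand exactly as you say. Your step-by-step accounting of the trivial Chern root, the $\widehat{A}$ exponential, and the powers of $c$ and $q^{1/8}$ assembling into $\eta(\tau)^{3(d-l)}$ and $e^{-c_1(W)/2}$ is just a more explicit rendering of that same computation.
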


Similar to Proposition 4.4, we have

\begin{prop}If $c_1(W)=0$ and the first Pontrjagin classes $p_1(M)=p_1(W)$ and the third cohomology group $H^3(M,R)=0$ and $M$ is a simple connected spin manifold, then
the elliptic genus ${\rm Ell}_\alpha(M,W,g,\tau,z)$ is a weak Jacobi form of weight $(d+1)-l$ and index $\frac{l}{2}$ over $\Gamma_0(2)$, $\Gamma^0(2)$, $\Gamma _{\theta}$ respectively.
Then the series $a_{n,1,g}(M,W,V,\tau)$,$a_{n,2,g}(M,W,V,\tau)$, $a_{n,3,g}(M,W,V,\tau)$ determined by
\begin{align}
{\rm exp}(-4\pi^2lG_2(\tau)z^2){\rm Ell}_\alpha(M,W,V,g,\tau,z)=\sum_{n\geq 0}a_{n,\alpha,g}(M,W,V,\tau)\cdot z^n,~~1\leq \alpha\leq 3
\end{align}
are modular forms of weight $d+1-l+n$ over $\Gamma_0(2)$, $\Gamma^0(2)$, $\Gamma _{\theta}$ respectively.
Let \begin{equation} a_{n,2,g}(M,W,V,\tau)=a_{n,2,g}^0+a_{n,2,g}^{\frac{1}{2}}q^{\frac{1}{2}}+a_{n,2,g}^1q+a_{n,2,g}^{\frac{3}{2}}q^{\frac{3}{2}}
+\cdots+a_{n,2,g}^m+\cdots.
\end{equation}
Then $a_{n,2,g}^0=0$, and
\begin{align}
&a_{0,2,g}^{\frac{1}{2}}=-\int_M\widehat{A}(M)\sum_{p=0}^l(-1)^p{\rm ch}(\wedge^p(W^*)){\rm ch}(\widetilde{E_C},g,d),\\\notag
&a_{1,2,g}^{\frac{1}{2}}=-\int_M\widehat{A}(M)\sum_{p=0}^l(-1)^p{\rm ch}(\wedge^p(W^*)){\rm ch}(\widetilde{E_C},g,d)(p-\frac{l}{2})2\pi\sqrt{-1},\\\notag
&a_{2,2,g}^{\frac{1}{2}}=\int_M\widehat{A}(M)\sum_{p=0}^l(-1)^p{\rm ch}(\wedge^p(W^*)){\rm ch}(\widetilde{E_C},g,d)(p-\frac{l}{2})^22\pi^2\\\notag
&-\int_M\widehat{A}(M)\sum_{p=0}^l(-1)^p{\rm ch}(\wedge^p(W^*)){\rm ch}(\widetilde{E_C},g,d)\frac{l}{6}\pi^2
\end{align}
\begin{align}
&a_{1,2,g}^{1}=\int_M\widehat{A}(M)\sum_{p=0}^l(-1)^p{\rm ch}(\wedge^p(W^*)){\rm ch}(\wedge^2\widetilde{V_C},g,d)(p-\frac{l}{2})2\pi\sqrt{-1},\\\notag
&a_{2,2,g}^{1}=\frac{l}{6}\pi^2\int_M\widehat{A}(M)\sum_{p=0}^l(-1)^p{\rm ch}(\wedge^p(W^*)){\rm ch}(\wedge^2\widetilde{V_C},g,d)\\\notag
&+\int_M\widehat{A}(M)\sum_{p=0}^l(-1)^p{\rm ch}(\wedge^p(W^*)){\rm ch}(\wedge^2\widetilde{V_C},g,d)
\frac{1}{2}(p-\frac{l}{2})^2(2\pi\sqrt{-1})^2.
\end{align}
\end{prop}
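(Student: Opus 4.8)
The plan is to adapt to the Toeplitz-operator setting of Section~4.2 the three-step argument already used for Theorem~4.3 and Propositions~3.4, 3.5 and~4.4. \textbf{Step 1 (Jacobi-form property).} Starting from the closed form for ${\rm Ell}_\alpha(M,W,g,\tau,z)$ in Lemma~4.8, I would substitute the transformation formulas (2.12)--(2.24) for $\theta,\theta_1,\theta_2,\theta_3$ and their $v$-derivatives under the generators $S$ and $T$, together with the transformation law of the Chern--Simons transgression ${\rm ch}(Q_\alpha(E),g^{Q_\alpha(E)},d,\tau)$ recorded just before Definition~4.7 (whose derivation already uses $c_3(E_C,g,d)=0$, which holds here since $H^3(M,\mathbf{R})=0$). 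The hypothesis $c_1(W)=0$ removes the factor $e^{-c_1(W)/2}$; $p_1(M)=p_1(W)$ is exactly the identity that makes the $\exp(\pi\sqrt{-1}\tau v^2)$-type anomalies produced under $S$ by $\prod_{i=1}^{d}\frac{2\pi\sqrt{-1}x_i}{\theta(\tau,x_i)}$ and $\prod_{j=1}^{l}\theta(\tau,w_j-z)$ cancel against those of $\eta(\tau)^{3(d-l)}$ and of the transgression factor; and $M$ simply connected spin provides the lift $g^{\Delta}$. Collecting the automorphy factors as in (4.7), I expect $T$ to fix ${\rm Ell}_1$ and interchange ${\rm Ell}_2\leftrightarrow{\rm Ell}_3$, $S$ to interchange ${\rm Ell}_1\leftrightarrow{\rm Ell}_2$ and fix ${\rm Ell}_3$, in each case up to the index-$\tfrac{l}{2}$ automorphy factor $\tau^{(d+1)-l}\exp(\pi\sqrt{-1}l z^2/\tau)$, and the translations $z\mapsto z+1$, $z\mapsto z+\tau$ to produce $(-1)^l$ and $(-1)^l\exp(-\pi\sqrt{-1}l(\tau+2z))$. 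Since $\{T,ST^2ST\}$ generate $\Gamma_0(2)$, $\{STS,T^2STS\}$ generate $\Gamma^0(2)$, and $\Gamma_\theta$ is generated analogously, this yields that ${\rm Ell}_1,{\rm Ell}_2,{\rm Ell}_3$ are weak Jacobi forms of weight $(d+1)-l$ and index $\tfrac{l}{2}$ over $\Gamma_0(2),\Gamma^0(2),\Gamma_\theta$ respectively.

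\textbf{Step 2 (extracting modular forms).} I would then apply the $\Gamma_0(2),\Gamma^0(2),\Gamma_\theta$ analogue of Proposition~3.4 to each ${\rm Ell}_\alpha$ with index $m=\tfrac{l}{2}$, so that $8\pi^2 m\,G_2(\tau)=4\pi^2 l\,G_2(\tau)$: the quasi-modular anomaly of $G_2$ under $S$ cancels the Gaussian factor $\exp(2\pi\sqrt{-1}mcz^2/(c\tau+d_0))$, so that every coefficient $a_{n,\alpha,g}(\tau)$ in $\exp(-4\pi^2 l\,G_2(\tau)z^2){\rm Ell}_\alpha(M,W,g,\tau,z)=\sum_{n\ge 0}a_{n,\alpha,g}(\tau)z^n$ is a genuine modular form of weight $(d+1)-l+n$ over the respective subgroup.

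\textbf{Step 3 (Fourier expansions).} Finally, I would run the $q^{1/2}$-bookkeeping of the proof of Proposition~4.4, simplified by two features of the present setting: ${\rm E}(M,W,\tau,z)$ involves only integer powers of $q$, and $Q_2(E)=\bigotimes_{n\ge 1}\wedge_{-q^{n-1/2}}(\widetilde{E_C})=1-\widetilde{E_C}\,q^{1/2}+\wedge^2\widetilde{E_C}\,q+O(q^{3/2})$ has $q^0$-term the trivial line bundle with trivial $g$-action, so ${\rm ch}(Q_2(E),g^{Q_2(E)},d,\tau)=-{\rm ch}(\widetilde{E_C},g,d)\,q^{1/2}+{\rm ch}(\wedge^2\widetilde{E_C},g,d)\,q+O(q^{3/2})$ with vanishing $q^0$-part. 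Hence $\widehat{A}(M){\rm ch}({\rm E}(M,W,\tau,z)){\rm ch}(Q_2(E),g^{Q_2(E)},d,\tau)$ has no $q^0$-term, whence $a^0_{n,2,g}=0$ for all $n$; its $q^{1/2}$-term is $B_{1/2}(z):=-\widehat{A}(M)\sum_{p=0}^{l}(-1)^p e^{2\pi\sqrt{-1}(p-l/2)z}{\rm ch}(\wedge^p(W^*)){\rm ch}(\widetilde{E_C},g,d)$, and its $q^1$-term is $B_1(z)$, the same expression with $-{\rm ch}(\widetilde{E_C},g,d)$ replaced by ${\rm ch}(\wedge^2\widetilde{E_C},g,d)$ (the degree-$q^1$ piece of $\widehat{A}(M){\rm ch}({\rm E})$ does not enter, as it would multiply the vanishing $q^0$-transgression). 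With $\exp(-4\pi^2 l\,G_2(\tau)z^2)=A_0(z)+A_1(z)q+O(q^2)$ and $A_0(z)=1+\tfrac{l}{6}\pi^2z^2+\tfrac{l^2}{72}\pi^4z^4+O(z^6)$ as in Proposition~4.4, one gets $\sum_n a^{1/2}_{n,2,g}z^n=A_0(z)B_{1/2}(z)$ and $\sum_n a^{1}_{n,2,g}z^n=A_0(z)B_{1}(z)$; expanding $e^{2\pi\sqrt{-1}(p-l/2)z}$ and $A_0(z)$ in $z$ and matching the $z^0,z^1,z^2$ coefficients then yields the displayed formulas for $a^{1/2}_{j,2,g}$ ($0\le j\le 2$) and $a^{1}_{j,2,g}$ ($j=1,2$).

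I expect Step~1 to be the main obstacle: it requires carrying along simultaneously the automorphy factors of all four theta functions, of $\eta(\tau)^{3(d-l)}$, and of the transgression form ${\rm ch}(Q_\alpha(E),g^{Q_\alpha(E)},d,\tau)$, and checking that $c_1(W)=0$, $p_1(M)=p_1(W)$ and $H^3(M,\mathbf{R})=0$ force every anomalous exponential to cancel; once the (4.7)-type transformation identities are established, Steps~2 and~3 are formal manipulations of power series.
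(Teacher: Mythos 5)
Your three-step plan is exactly the argument the paper intends: the paper offers no separate proof of this proposition, merely invoking ``Similar to Proposition 4.4,'' and your Steps 1--3 reproduce in the Toeplitz setting the proofs of Theorem 4.3 (theta-transformation laws plus the modularity of the transgression form ${\rm ch}(Q_\alpha(E),g^{Q_\alpha(E)},d,\tau)$), Proposition 3.4, and the $q^{1/2}$-bookkeeping of Proposition 4.4, including the key observation that the $q^0$-part of ${\rm ch}(Q_2(E),g^{Q_2(E)},d,\tau)$ vanishes so that $a^0_{n,2,g}=0$. No discrepancies with the paper's (implicit) proof.
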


Similar to Proposition 4.5, we have
\begin{prop}
If $c_1(W)=0$ and the first Pontrjagin classes $p_1(M)=p_1(W)$ and the third cohomology group $H^3(M,R)=0$ and $M$ is a simple connected spin manifold, then\\
1)if either $d-l$ is even or $d-l+1< 0$, then
\begin{align}
&a_{0,2,g}^{\frac{1}{2}}=-\int_M\widehat{A}(M)\sum_{p=0}^l(-1)^p{\rm ch}(\wedge^p(W^*)){\rm ch}(\widetilde{E_C},g,d)=0,
\\\notag
&a_{0,2,g}^{1}=\int_M\widehat{A}(M)\sum_{p=0}^l(-1)^p{\rm ch}(\wedge^p(W^*)){\rm ch}(\wedge^2\widetilde{E_C},g,d)=0,
\end{align}
and associated Toeplitz operators indices are zero. \\
2)if either $d-l$ is odd or $d+2-l< 0$, then $a^{\frac{1}{2}}_{1,2,g}=a^1_{1,2,g}=0.$\\
3)if either $d-l$ is even or $d+3-l< 0$, then $a^{\frac{1}{2}}_{2,2,g}=a^1_{2,2,g}=0.$\\
\end{prop}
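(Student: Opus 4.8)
The plan is to deduce all three statements from the modularity recorded in Proposition 4.9 together with the structure of the ring ${\mathcal{M}}_{{\bf R}}(\Gamma^0(2))$, exactly as in the proof of Proposition 4.5. By Proposition 4.9 the series $a_{n,2,g}(M,W,V,\tau)$ is a modular form of weight $d+1-l+n$ over $\Gamma^0(2)$, and since ${\mathcal{M}}_{{\bf R}}(\Gamma^0(2))={\bf R}[\delta_2(\tau),\varepsilon_2(\tau)]$ with $\delta_2$ of weight $2$ and $\varepsilon_2$ of weight $4$ (Lemma 2.2), this ring has no nonzero element of odd weight and no nonzero element of negative weight. Hence $a_{n,2,g}(M,W,V,\tau)\equiv 0$ whenever $d+1-l+n$ is odd or $d+1-l+n<0$; in that case every coefficient of its $q^{\frac{1}{2}}$-expansion vanishes, in particular $a_{n,2,g}^0=a_{n,2,g}^{\frac{1}{2}}=a_{n,2,g}^1=0$.

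I would first run this for $n=0$: the weight $d+1-l$ is odd exactly when $d-l$ is even and negative exactly when $d-l+1<0$, which are precisely the hypotheses of part (1), so $a_{0,2,g}\equiv 0$. To put the vanishing in the stated closed form I would expand $Q_2(E)=\bigotimes_{n\geq 1}\wedge_{-q^{n-\frac{1}{2}}}(\widetilde{E_C})=1-\widetilde{E_C}q^{\frac{1}{2}}+\wedge^2\widetilde{E_C}\,q+O(q^{\frac{3}{2}})$ and note that ${\rm ch}(E(M,W,\tau,0))$ involves only integral powers of $q$, so its $O(q)$ correction influences only the $q^{\frac{3}{2}}$ term of ${\rm Ell}_2(M,W,g,\tau,0)=a_{0,2,g}(M,W,V,\tau)$; since $g$ acts trivially on the $q^0$-summand of $Q_2(E)$ the corresponding transgression term vanishes, so $a_{0,2,g}^0=0$, $a_{0,2,g}^{\frac{1}{2}}=-\int_M\widehat{A}(M){\rm ch}(\wedge_{-1}(W^*)){\rm ch}(\widetilde{E_C},g,d)$ and $a_{0,2,g}^1=\int_M\widehat{A}(M){\rm ch}(\wedge_{-1}(W^*)){\rm ch}(\wedge^2\widetilde{E_C},g,d)$, which give the displayed identities after substituting $\wedge_{-1}(W^*)=\sum_{p=0}^{l}(-1)^p\wedge^p(W^*)$. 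The statement that the associated Toeplitz indices vanish is then just the remark from Definition 4.7 that these integrals are the indices of the corresponding twisted Toeplitz operators.

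The cases $n=1$ and $n=2$ go the same way. For $n=1$ the weight $d+2-l$ is odd iff $d-l$ is odd and negative iff $d+2-l<0$, which are the hypotheses of part (2), so $a_{1,2,g}\equiv 0$ and the explicit expressions for $a_{1,2,g}^{\frac{1}{2}}$ and $a_{1,2,g}^{1}$ in Proposition 4.9 are forced to be zero; for $n=2$ the weight $d+3-l$ is odd iff $d-l$ is even and negative iff $d+3-l<0$, which are the hypotheses of part (3), so $a_{2,2,g}\equiv 0$ and $a_{2,2,g}^{\frac{1}{2}}$, $a_{2,2,g}^{1}$ vanish.

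I do not anticipate a real obstacle: the only computation beyond citing Proposition 4.9 is the short $q^{\frac{1}{2}}$-expansion used to write $a_{0,2,g}^{1}$ in closed form, which is parallel to the expansion already carried out in the proof of Proposition 4.4; the rest is bookkeeping of which of the weights $d+1-l,\ d+2-l,\ d+3-l$ is odd or negative under the stated numerical conditions.
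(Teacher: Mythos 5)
Your proposal is correct and follows essentially the same route the paper intends: Proposition 4.10 is justified there only by "Similar to Proposition 4.5," i.e.\ by the modularity of $a_{n,2,g}$ of weight $d+1-l+n$ over $\Gamma^0(2)$ from Proposition 4.9 together with the fact that ${\mathcal{M}}_{{\bf R}}(\Gamma^0(2))={\bf R}[\delta_2,\varepsilon_2]$ contains no nonzero forms of odd or negative weight, and your parity/negativity bookkeeping for $n=0,1,2$ matches the stated hypotheses. The short $q^{\frac12}$-expansion of $Q_2(E)$ you use to put $a_{0,2,g}^{\frac12}$ and $a_{0,2,g}^{1}$ in closed form is the same computation underlying Proposition 4.9, so nothing is missing.
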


Similar to Theorem 4.6, we have

\begin{thm}If $c_1(W)=0$ and the first Pontrjagin classes $p_1(M)=p_1(W)$ and the third cohomology group $H^3(M,R)=0$ and $M$ is a simple connected spin manifold, then\\
1)if $d-l=1$, then $a^{\frac{1}{2}}_{0,2,g}=a^1_{0,2,g}=a^{\frac{3}{2}}_{0,2,g}=0.$
\\
2)if $d-l=3$, then $a^{1}_{0,2,g}=8a^{\frac{1}{2}}_{0,2,g}$.\\
3)if $d-l=5$, then $a^{1}_{0,2,g}=32a^{\frac{1}{2}}_{0,2,g}$.\\
4)if $d-l=0$, then $a^{\frac{1}{2}}_{1,2,g}=a^1_{1,2,g}=0.$
\\
5)if $d-l=2$, then $a^{1}_{1,2,g}=8a^{\frac{1}{2}}_{1,2,g}$.\\
6)if $d-l=4$, then $a^{1}_{1,2,g}=32a^{\frac{1}{2}}_{1,2,g}$.\\
\end{thm}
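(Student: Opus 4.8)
The plan is to imitate the proof of Theorem 4.6, with the even-dimensional holomorphic Euler numbers there replaced by the Toeplitz-operator indices $a_{n,2,g}$, working inside $\mathcal{M}_{{\bf R}}(\Gamma^0(2))$. By Proposition 4.9, under the stated hypotheses ($c_1(W)=0$, $p_1(M)=p_1(W)$, $H^3(M,{\bf R})=0$, $M$ simply connected spin) the function $a_{0,2,g}(M,W,V,\tau)$ is a modular form over $\Gamma^0(2)$ of weight $d+1-l$, and $a_{1,2,g}(M,W,V,\tau)$ is one of weight $d+2-l$; moreover in each case the constant ($q^0$) Fourier coefficient vanishes, since $a^0_{n,2,g}=0$ (the $q^0$-part of ${\rm ch}(Q_2(E),g^{Q_2(E)},d,\tau)$ is the transgression form of the trivial bundle, which is zero). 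So the statement reduces to a purely structural fact about low-weight modular forms over $\Gamma^0(2)$ with vanishing constant term, and it only remains to read off Fourier coefficients.

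For that I would invoke Lemma 2.2: $\mathcal{M}_{{\bf R}}(\Gamma^0(2))={\bf R}[\delta_2(\tau),\varepsilon_2(\tau)]$ with $\delta_2$ of weight $2$ and $\varepsilon_2$ of weight $4$, so modular forms over $\Gamma^0(2)$ of weight $2$, $4$, $6$ are linear combinations of $\delta_2$; of $\varepsilon_2$ and $(8\delta_2)^2$; and of $\varepsilon_2\cdot(8\delta_2)$ and $(8\delta_2)^3$ respectively. Feeding in the $q^{\frac{1}{2}}$-expansions from Section 2, $8\delta_2(\tau)=-1-24q^{\frac{1}{2}}-24q+\cdots$ and $\varepsilon_2(\tau)=q^{\frac{1}{2}}+8q+\cdots$, one finds that $(8\delta_2)^2$ and $(8\delta_2)^3$ have nonzero constant terms $1$ and $-1$, whereas $\varepsilon_2\cdot(8\delta_2)=-q^{\frac{1}{2}}-32q+\cdots$.

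Then I dispose of the six cases. If $d-l=1$ (with $n=0$) or $d-l=0$ (with $n=1$), the relevant $a$ has weight $2$, hence is a multiple of $\delta_2$; vanishing of the constant term forces $a\equiv0$, giving $a^{\frac{1}{2}}_{0,2,g}=a^1_{0,2,g}=a^{\frac{3}{2}}_{0,2,g}=0$ (resp. $a^{\frac{1}{2}}_{1,2,g}=a^1_{1,2,g}=0$). If $d-l=3$ ($n=0$) or $d-l=2$ ($n=1$), the weight is $4$, so $a=\lambda_1\varepsilon_2+\lambda_2(8\delta_2)^2$; matching the constant term gives $\lambda_2=0$, hence $a=\lambda_1(q^{\frac{1}{2}}+8q+\cdots)$ and $a^1=8a^{\frac{1}{2}}$. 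If $d-l=5$ ($n=0$) or $d-l=4$ ($n=1$), the weight is $6$, so $a=\lambda_1\,\varepsilon_2\cdot(8\delta_2)+\lambda_2(8\delta_2)^3$; again $\lambda_2=0$, hence $a=\lambda_1(-q^{\frac{1}{2}}-32q+\cdots)$ and $a^1=32a^{\frac{1}{2}}$. Combining with the explicit formulas for $a^{\frac{1}{2}}_{0,2,g}$, $a^{\frac{1}{2}}_{1,2,g}$, $a^1_{1,2,g}$ in Proposition 4.9 converts these relations into the asserted identities among the indices of the associated Toeplitz operators.

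The only genuinely delicate point is the middle step: keeping the low $q^{\frac{1}{2}}$- and $q$-coefficients of the relevant products straight, and verifying two small facts so that Lemma 2.2 applies verbatim — that no further generator can appear in weights $2,4,6$ (immediate from ${\bf R}[\delta_2,\varepsilon_2]$ by a weight count, all three being even and positive) and that $a_{n,2,g}$ carries the trivial character on $\Gamma^0(2)$ (which follows from the $S$- and $T$-transformation laws of the theta-quotient in Lemma 4.8, exactly as in the proof of Proposition 4.9). Everything else is routine.
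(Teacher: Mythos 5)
Your proposal is correct and is essentially the paper's own argument: the paper gives no separate proof here, simply deferring to the proof of Theorem 4.6, which is exactly your scheme — use Proposition 4.9 to get that $a_{n,2,g}$ is a $\Gamma^0(2)$ modular form of weight $d+1-l+n$ with vanishing constant term $a^0_{n,2,g}=0$, write it in the basis built from $\delta_2,\varepsilon_2$ via Lemma 2.2, and compare the $1$, $q^{\frac{1}{2}}$, $q$ coefficients. Your Fourier-coefficient bookkeeping ($(8\delta_2)^2=1+\cdots$, $\varepsilon_2\cdot(8\delta_2)=-q^{\frac{1}{2}}-32q+\cdots$) checks out and yields the stated factors $8$ and $32$.
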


\section{Two-variable meromorphic elliptic genera which are $\Gamma_0(2)$, $\Gamma^0(2)$, $\Gamma _{\theta}$-Jacobi forms}
\subsection{Even dimensional case}
Let $M^{2d}$ be compact smooth almost complex manifold and $(\overline{\partial}+\overline{\partial}^*)\otimes E$ on $\wedge^{0,*}(T^*M)\rightarrow \wedge^{0,*}(T^*M)$ be the twisted Dirac operator.
Let
\begin{align}
P^z_q(TM)=&\bigotimes _{n=0}^{\infty}
\wedge_{y^{-1}q^{n}}(T^{0,1}(M)-C^d)\otimes\bigotimes _{n=1}^{\infty}
\wedge_{yq^{n}}(T^{1,0}(M)-C^d)\\\notag
&\otimes\bigotimes _{n=1}^{\infty}S_{q^n}(T^{1,0}M-C^d)\otimes\bigotimes _{n=1}^{\infty}S_{q^n}(T^{0,1}M-C^d),\\\notag
Q^z_q(TM)=&\bigotimes _{n=1}^{\infty}
\wedge_{-y^{-1}q^{n-\frac{1}{2}}}(T^{0,1}(M)-C^d)\otimes\bigotimes _{n=1}^{\infty}
\wedge_{-yq^{n-\frac{1}{2}}}(T^{1,0}(M)-C^d)\\\notag
&\otimes\bigotimes _{n=1}^{\infty}S_{q^n}(T^{1,0}M-C^d)\otimes\bigotimes _{n=1}^{\infty}S_{q^n}(T^{0,1}M-C^d),\\\notag
R^z_q(TM)=&\bigotimes _{n=1}^{\infty}
\wedge_{y^{-1}q^{n-\frac{1}{2}}}(T^{0,1}(M)-C^d)\otimes\bigotimes _{n=1}^{\infty}
\wedge_{yq^{n-\frac{1}{2}}}(T^{1,0}(M)-C^d)\\\notag
&\otimes\bigotimes _{n=1}^{\infty}S_{q^n}(T^{1,0}M-C^d)\otimes\bigotimes _{n=1}^{\infty}S_{q^n}(T^{0,1}M-C^d).
\end{align}
Let $2\pi\sqrt{-1}x_j$, $1\leq j \leq d$ be the Chern root of $T^{1,0}M$, then
\begin{equation}
{\rm Ind}((\overline{\partial}+\overline{\partial}^*)\otimes P^z_q(TM))=\int_M\prod_{j=1}^dx_j\frac{\theta'(0,\tau)}{\theta_1(z,\tau)}
\frac{\theta_1(x_j+z,\tau)}{\theta(x_j,\tau)}:=\Phi_L.
\end{equation}
If we assume that $M$ is spin, then
\begin{align}
&{\rm Ind}(D\otimes Q^z_q(TM))=\int_M\prod_{j=1}^dx_j\frac{\theta'(0,\tau)}{\theta_2(z,\tau)}
\frac{\theta_2(x_j+z,\tau)}{\theta(x_j,\tau)}:=\Phi_W,\\\notag
&{\rm Ind}(D\otimes R^z_q(TM))=\int_M\prod_{j=1}^dx_j\frac{\theta'(0,\tau)}{\theta_3(z,\tau)}
\frac{\theta_3(x_j+z,\tau)}{\theta(x_j,\tau)}:=\Phi_W^*.
\end{align}

\begin{prop}If $c_1(M)=0$, then
the elliptic genus $\Phi_L,~ \Phi_W, ~\Phi_W^*$ are weak meromorphic Jacobi forms of weight $d$ and index $0$ over $\Gamma_0(2)$, $\Gamma^0(2)$, $\Gamma _{\theta}$ respectively.
\end{prop}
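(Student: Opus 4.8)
The plan is to check directly that each of the three functions, written in the explicit form (5.2)--(5.3), satisfies the two transformation laws of a meromorphic Jacobi form in Definition 2.3, by inserting the modular transformation laws (2.12)--(2.24) of the Jacobi theta functions under the generators $S$ and $T$ of $SL_2(\mathbf{Z})$ together with their quasi-periodicity in the $z$-variable, namely $\theta(v+1,\tau)=-\theta(v,\tau)$, $\theta(v+\tau,\tau)=-q^{-1/2}e^{-2\pi\sqrt{-1}v}\theta(v,\tau)$ and the analogous relations for $\theta_1,\theta_2,\theta_3$. This parallels the proof of Theorem 3.2 in \cite{LiP} and the modular-invariance computations in \cite{Li1} and \cite{HY}.

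First I would settle the group. Under $T$ the functions $\theta$, $\theta_1$ and $\theta'(0,\tau)$ are merely multiplied by $e^{\pi\sqrt{-1}/4}$, while $\theta_2$ and $\theta_3$ are interchanged; under $S$ the functions $\theta$ and $\theta_3$ are carried into themselves up to the automorphy root $(\tau/\sqrt{-1})^{1/2}$ and the Gaussian $e^{\pi\sqrt{-1}\tau v^2}$, while $\theta_1$ and $\theta_2$ are interchanged. Since the generators of $\Gamma_0(2)$ are $T$ and $ST^2ST$, and tracking the induced permutation shows that each of these words returns $\theta_1$ to itself, the function $\Phi_L$ --- which involves only $\theta$, $\theta'(0,\tau)$ and $\theta_1$ --- transforms correctly over $\Gamma_0(2)$; in the same way $\Phi_W$, built from $\theta$ and $\theta_2$, is handled over $\Gamma^0(2)$ (generators $STS$, $T^2STS$), and $\Phi_W^*$, built from $\theta$ and $\theta_3$, over $\Gamma_\theta$ (generated by $S$ and $T^2$). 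Under the full group one moreover finds that $S$ interchanges $\Phi_L$ and $\Phi_W$ and fixes $\Phi_W^*$, while $T$ fixes $\Phi_L$ and interchanges $\Phi_W$ and $\Phi_W^*$, consistently with the three claims.

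Next I would pin down the weight and the index. For the $S$-transformation of, say, $\Phi_L$, substitute (2.12), (2.13), (2.17) into each factor $x_j\,\frac{\theta'(0,\tau)\,\theta_1(x_j+z,\tau)}{\theta_1(z,\tau)\,\theta(x_j,\tau)}$ with $v=x_j+z/\tau$, $v=z/\tau$, $v=x_j$ respectively. The two automorphy roots $(\tau/\sqrt{-1})^{1/2}$ appearing in the numerator cancel the two in the denominator, and the constant $1/\sqrt{-1}$ in $\theta'(0,-1/\tau)$ cancels the one in $\theta(x_j,-1/\tau)$; the residual factor $\tau$ from each $\theta'(0,-1/\tau)$, together with the rescaling $x_j\mapsto\tau x_j$ forced inside the surviving $\theta_2(\tau x_j+z,\tau)$ and $\theta(\tau x_j,\tau)$ and the extraction of the degree-$2d$ component of the form-valued integrand, assembles into the overall factor $\tau^d$; hence weight $d$. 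For the index, the Gaussian anomalies $e^{\pi\sqrt{-1}\tau x_j^2}$ coming from the numerator $\theta_1(x_j+z)$ are cancelled by those from the denominator $\theta(x_j)$, and the $e^{\pi\sqrt{-1}z^2/\tau}$ from $\theta_1(x_j+z)$ cancels the one from $\theta_1(z)$, so the only exponential left is $\prod_{j=1}^d e^{2\pi\sqrt{-1}x_j z}=e^{2\pi\sqrt{-1}z\sum_j x_j}$, which is $1$ because $c_1(M)=2\pi\sqrt{-1}\sum_j x_j=0$. Likewise, under $z\mapsto z+\lambda\tau+\mu$ the quasi-periodicity factors from the $\theta_a(x_j+z)$ and from $\theta_a(z)$ telescope to $e^{-2\pi\sqrt{-1}\lambda\sum_j x_j}=1$, the signs from $\theta_a(v+1)=-\theta_a(v)$ cancelling in the balanced numerator/denominator count. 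Thus no residual index factor $\exp(2\pi\sqrt{-1}m cz^2/(c\tau+d_0))$ and no lattice-translation multiplier survive, i.e. the index is $0$. Finally each of $\Phi_L,\Phi_W,\Phi_W^*$ is holomorphic in $\tau\in\mathbf{H}$ with a $q$-expansion bounded below, so it is \emph{weak} at the cusps, and meromorphic in $z$ because the denominators $\theta_a(z,\tau)$ vanish at the (half-)lattice points; this is exactly the asserted conclusion.

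The main obstacle is the bookkeeping: one must carry the powers of $(\tau/\sqrt{-1})^{1/2}$, the constants $1/\sqrt{-1}$, the phases $e^{\pi\sqrt{-1}/4}$ and the signs of the quasi-periodicity relations through a chosen word in $S$ and $T$ representing an arbitrary element $\gamma$ (with lower-right entry $d_0$) of each congruence subgroup, and verify that, once the theta functions have been permuted back to themselves, these factors assemble into precisely $(c\tau+d_0)^d$ (times at most a character value) with vanishing Gaussian term --- a computation that is routine but lengthy. The one slightly non-formal point is the justification that the substitution $x_j\mapsto\tau x_j$ together with the selection of the top-degree form component produces the clean power $\tau^d$; this is treated exactly as in \cite{LiP} and \cite{Li1}. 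Note that only the hypothesis $c_1(M)=0$ is used --- $p_1$-type anomalies do not enter because $\Phi_L,\Phi_W,\Phi_W^*$ carry no auxiliary bundle --- so no further assumption is needed.
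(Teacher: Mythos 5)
Your proposal is correct and follows essentially the same route as the paper, which simply records the six transformation identities $\Phi_L(z,\tau+1)=\Phi_L(z,\tau)$, $\Phi_L(z/\tau,-1/\tau)=\tau^d\Phi_W(z,\tau)$, etc., and reads off the proposition from them. You merely carry out in detail what the paper leaves implicit --- the generator bookkeeping for $\Gamma_0(2)$, $\Gamma^0(2)$, $\Gamma_\theta$, the cancellation of the Gaussian factors using $c_1(M)=0$ to get index $0$, and the lattice-translation law in $z$, none of which the paper writes out.
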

\begin{proof} This comes from the following equalities:
\begin{align}
&\Phi_L(z,\tau+1)=\Phi_L(z,\tau),~~\Phi_L(\frac{z}{\tau},-\frac{1}{\tau})=\tau^d\Phi_W(z,\tau),\\\notag
&\Phi_W(z,\tau+1)=\Phi^*_W(z,\tau),~~\Phi_W(\frac{z}{\tau},-\frac{1}{\tau})=\tau^d\Phi_L(z,\tau),\\\notag
&\Phi_W^*(z,\tau+1)=\Phi_W(z,\tau),~~\Phi_W^*(\frac{z}{\tau},-\frac{1}{\tau})=\tau^d\Phi_W^*(z,\tau).
\end{align}
\end{proof}
\subsection{Odd dimensional case}
Let $M$ be a spin $(2d+1)$-manifold and $TM\otimes C=T^{1,0}M\oplus T^{0,1}M\oplus C$ and $c_1(T^{1,0}M)=0$. Consider the indices of the following Toeplitz operators
\begin{equation}
{\rm Ind}(T_{D\otimes P^z_q(TM),g^{Q_1(E)}})=2^{\frac{N}{2}}\int_M\prod_{j=1}^dx_j\frac{\theta'(0,\tau)}{\theta_1(z,\tau)}
\frac{\theta_1(x_j+z,\tau)}{\theta(x_j,\tau)}{\rm ch}(Q_1(E),g^{Q_1(E)}£¬d,\tau):=\Phi_{L,g}.
\end{equation}

\begin{align}
&2^{\frac{N}{2}}{\rm Ind}(T_{D\otimes Q^z_q(TM),g^{Q_2(E)}})=2^{\frac{N}{2}}\int_M\prod_{j=1}^dx_j\frac{\theta'(0,\tau)}{\theta_2(z,\tau)}
\frac{\theta_2(x_j+z,\tau)}{\theta(x_j,\tau)}{\rm ch}(Q_2(E),g^{Q_2(E)}£¬d,\tau):=\Phi_{W,g},\\\notag
&2^{\frac{N}{2}}{\rm Ind}(T_{D\otimes R^z_q(TM),g^{Q_3(E)}})=2^{\frac{N}{2}}\int_M\prod_{j=1}^dx_j\frac{\theta'(0,\tau)}{\theta_3(z,\tau)}
\frac{\theta_3(x_j+z,\tau)}{\theta(x_j,\tau)}{\rm ch}(Q_3(E),g^{Q_3(E)}£¬d,\tau):=\Phi_{W,g}^*.
\end{align}

\begin{prop}If $c_1(T^{1,0}M)=0$, then
the elliptic genus $\Phi_{L,g},~ \Phi_{W,g}, ~\Phi_{W,g}^*$ are weak meromorphic Jacobi forms of weight $d+1$ and index $0$ over $\Gamma_0(2)$, $\Gamma^0(2)$, $\Gamma _{\theta}$ respectively.
\end{prop}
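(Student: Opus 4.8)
The plan is to run the same two-step argument that proves the preceding (even-dimensional) Proposition, combined with the modular transformation properties of ${\rm ch}(Q_\alpha(E),g^{Q_\alpha(E)},d,\tau)$ established in Section 4.2 (following Proposition 2.2 of \cite{HY}). Up to a $\tau$-independent normalization the integrand of $\Phi_{L,g},\Phi_{W,g},\Phi_{W,g}^{*}$ is a product of two pieces that transform independently under $SL_2(\mathbf Z)$ and under the translations $z\mapsto z+1,\ z\mapsto z+\tau$: the geometric theta-quotient
\[
B_\alpha(x,z,\tau):=\prod_{j=1}^{d} x_j\,\frac{\theta'(0,\tau)}{\theta_\alpha(z,\tau)}\,\frac{\theta_\alpha(x_j+z,\tau)}{\theta(x_j,\tau)},\qquad\alpha=1,2,3,
\]
built from the Chern roots $2\pi\sqrt{-1}x_j$ of $T^{1,0}M$ (with the dictionary $\theta_1\leftrightarrow L$, $\theta_2\leftrightarrow W$, $\theta_3\leftrightarrow W^{*}$), and the spectral factor $C_\alpha(\tau):={\rm ch}(Q_\alpha(E),g^{Q_\alpha(E)},d,\tau)$, which involves neither $x$ nor $z$; thus $\Phi_{\bullet,g}$ is a constant multiple of $\int_M B_\alpha\,C_\alpha$.

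First I would treat $B_\alpha$. Inserting the theta transformation laws (2.12)--(2.24) into $B_\alpha$ exactly as in the proof of the even-dimensional Proposition gives, under $\tau\mapsto\tau+1$, that $B_1$ is fixed while $B_2\leftrightarrow B_3$, and under $(\tau,z)\mapsto(-\tfrac1\tau,\tfrac z\tau)$ that $B_1(x,\tfrac z\tau,-\tfrac1\tau)=B_2(\tau x,z,\tau)$, $B_2(x,\tfrac z\tau,-\tfrac1\tau)=B_1(\tau x,z,\tau)$ and $B_3(x,\tfrac z\tau,-\tfrac1\tau)=B_3(\tau x,z,\tau)$, where $B_\alpha(\tau x,\cdot)$ denotes the same expression with the Chern roots rescaled by $\tau$; concretely the degree-$2m$ component of $B_\alpha$ scales by $\tau^{m}$, i.e.\ carries modular weight $m$. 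The $z$-translations and the $S$-computation leave behind, beyond the automorphy factor, only exponentials in $c_1(T^{1,0}M)$, which vanish since $c_1(T^{1,0}M)=0$; in particular no $\exp(2\pi\sqrt{-1}m z^{2}/\tau)$-type factor survives, so the index is $0$. This is verbatim the computation behind the even-dimensional Proposition, which is why that statement has weight $d$.

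Next I would invoke the modularity of $C_\alpha(\tau)$ recorded in Section 4.2: since $\theta'_\alpha/\theta_\alpha$ is an odd function of its first argument, $C_\alpha$ is concentrated in form-degrees $\equiv 3\pmod 4$, its degree-$(4r-1)$ component $C_\alpha(\tau)^{(4r-1)}$ is a modular form of weight $2r$ over $\Gamma_0(2),\Gamma^0(2),\Gamma_\theta$ for $\alpha=1,2,3$ respectively (valid once $c_3(E_{\mathbf C},g,d)=0$, as throughout Section 4.2), and under $\tau\mapsto\tau+1$ one has $C_1\mapsto C_1$, $C_2\leftrightarrow C_3$, while under $\tau\mapsto-\tfrac1\tau$ the pair $C_1\leftrightarrow C_2$ is interchanged (up to the factor $\tau^{2r}$ on each graded piece) and $C_3$ is fixed, the $S$-interchange being forced since $S$ conjugates $\Gamma_0(2)$ to $\Gamma^0(2)$. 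Now combine the two factors: since $M$ has dimension $2d+1$, the integral extracts total form-degree $2d+1$, and any splitting $2d+1=2m+(4r-1)$ forces $m+2r=d+1$, so every graded contribution $B_\alpha^{(2m)}C_\alpha^{(4r-1)}$ carries the same modular weight $m+2r=d+1$, with index $0$ and no $z$-exponential (as $C_\alpha$ is $z$-independent). Matching the two permutation patterns yields $\Phi_{L,g}(z,\tau+1)=\Phi_{L,g}$, $\Phi_{W,g}(z,\tau+1)=\Phi_{W,g}^{*}$, $\Phi_{W,g}^{*}(z,\tau+1)=\Phi_{W,g}$ together with $\Phi_{L,g}(\tfrac z\tau,-\tfrac1\tau)=\tau^{d+1}\Phi_{W,g}$, $\Phi_{W,g}(\tfrac z\tau,-\tfrac1\tau)=\tau^{d+1}\Phi_{L,g}$ and $\Phi_{W,g}^{*}(\tfrac z\tau,-\tfrac1\tau)=\tau^{d+1}\Phi_{W,g}^{*}$, the odd-dimensional analogue of the display in the proof of the even case. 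Feeding these and the $z$-translation laws into the generator lists $\{T,\,ST^2ST\}$ of $\Gamma_0(2)$, $\{STS,\,T^2STS\}$ of $\Gamma^0(2)$ and $\{S,\,T^2\}$ of $\Gamma_\theta$ then verifies the two functional equations of Definition 2.3, so $\Phi_{L,g},\Phi_{W,g},\Phi_{W,g}^{*}$ are weak meromorphic Jacobi forms of weight $d+1$ and index $0$ over $\Gamma_0(2),\Gamma^0(2),\Gamma_\theta$ respectively.

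I expect the main obstacle to be the weight bookkeeping: one must check that the two contributions --- $m$ from $B_\alpha^{(2m)}$ and $2r$ from $C_\alpha^{(4r-1)}$ --- always sum to the single value $d+1$ no matter how the form-degree $2d+1$ is partitioned, which hinges both on $\dim M$ being \emph{odd} and on $C_\alpha$ being supported exactly in degrees $4r-1$ with the $\tau^{2r}$ scaling. A secondary point worth spelling out is that these genera are genuinely \emph{meromorphic} in $z$ --- the denominators $\theta_\alpha(z,\tau)$ vanish on $\mathbf Z+\mathbf Z\tau$ and its half-period shifts --- so one should note that this pole divisor, with its orders, is permuted within itself by the $SL_2(\mathbf Z)$- and $z$-translation actions, so that the meromorphic Jacobi form condition of Definition 2.3 is literally satisfied.
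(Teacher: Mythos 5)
Your proposal is correct and is essentially the argument the paper intends: the paper in fact states this proposition with no proof at all, the even-dimensional analogue being "proved" only by listing the transformation laws, and the modularity of ${\rm ch}(Q_\alpha(E),g^{Q_\alpha(E)},d,\tau)^{(4r-1)}$ being quoted from Section 4.2. Your explicit weight bookkeeping ($2m+(4r-1)=2d+1$ forcing $m+2r=d+1$) and the remark on the pole divisor supply exactly the details the paper leaves implicit.
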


\section{Acknowledgements}

 The author was supported by Science and Technology Development Plan Project of Jilin Province, China: No.20260102245JC and NSFC No.11771070.
\vskip 1 true cm
Data availability: This manuscript has no associated data.\\
 \indent  Declarations Conflict of interest: This manuscript has no conflict of interest.


\bigskip
\bigskip

 \indent{School of Mathematics and Statistics,
Northeast Normal University, Changchun Jilin, 130024, China }\\
\indent E-mail: {\it wangy581@nenu.edu.cn }\\

\end{document}